\theoremstyle{plain}
\newtheorem{theorem}{Theorem}[section]
\newtheorem{lemma}[theorem]{Lemma}
\newtheorem{proposition}[theorem]{Proposition}
\newtheorem{corollary}[theorem]{Corollary}
\numberwithin{equation}{section}
\theoremstyle{definition}
\newtheorem{definition}[theorem]{Definition}
\DeclareMathOperator{\Ob}{Ob}
\DeclareMathOperator{\Hom}{Hom}
\DeclareMathOperator{\Ker}{Ker}
\DeclareMathOperator{\op}{op}
\newcommand{\C}{{\mathscr{C}}}
\newcommand{\F}{{\mathbb{F}}}
\newcommand{\FI}{{\mathrm{FI}}}
\newcommand{\VI}{{\mathrm{VI}}}
\title{An application of Nakayama functor in representation stability theory}
\author{Wee Liang Gan}
\address{Department of Mathematics, University of California, Riverside, CA 92521, USA}
\email{wlgan@ucr.edu}
\author{Liping Li}
\address{Key Laboratory of High Performance Computing and Stochastic Information Processing (Ministry of Education), College of Mathematics and Computer Science, Hunan Normal University, Changsha, Hunan 410081, China}
\email{lipingli@hunnu.edu.cn}
\author{Changchang Xi}
\address{School of Mathematical Sciences, Capital Normal University, 100048 Beijing, China}
\email{xicc@cnu.edu.cn}
\thanks{L. Li is supported by the National Natural Science Foundation of China 11771135, the Construct Program of the Key Discipline in Hunan Province, and the Start-Up Funds of Hunan Normal University 830122-0037, while C.C.~Xi is partially supported by the National Natural Science Foundation of China 11331006.}
\subjclass[2010]{16G99, 20G05; 16D50, 18E10,}
\begin{document}

\begin{abstract}
Using the Nakayama functor, we construct an equivalence from a Serre quotient category of a category of finitely generated modules to a category of finite-dimensional modules.  We then apply this result to the categories FI$_G$ and VI$_q$, and answer positively an open question of Nagpal on representation stability theory.
\end{abstract}

\maketitle

\section{Introduction}

In a recent paper \cite{nagpal}, Nagpal has proved that quite a few representation theoretic and homological properties of the category $\FI$, whose objects are finite sets and morphisms are injections (see \cite{cef}), hold for the category $\VI_q$, whose objects are finite-dimensional vector spaces over a finite field $\F_q$ and morphisms are linear injections. In that paper, he also asked a question of whether one can establish an equivalence
\begin{equation*}
\VI_q \mbox{-mod} / \VI_q \mbox{-fdmod} \xrightarrow{\quad\sim\quad} \VI_q \mbox{-fdmod},
\end{equation*}
where $\VI_q \mbox{-mod}$ is the category of finitely generated $\VI_q$-modules over a field of characteristic zero, and $\VI_q \mbox{-fdmod}$ is its full subcategory of finite-dimensional $\VI_q$-modules; see \cite[Question 1.11]{nagpal}.  An analogue of this equivalence for $\FI$-modules was proved by Sam and Snowden in  \cite[Theorem 3.2.1]{ss2}.

The purpose of the present paper is to prove a general result in an abstract setting from which this kind of equivalences can be deduced. In particular, applying our general result, the above equivalence can be obtained for both $\FI_G$ and $\VI_q$ with $G$ an arbitrary finite group (see Section \ref{question} for definition). Therefore, we give not only an affirmative answer to the above-mentioned open question in \cite[Question 1.11]{nagpal}, but also a new proof for the case of the category $\FI$ when taking $G$ to be trivial in $\FI_G$. Our approach only relies on several abstract homological properties of representations, and hence works for a wider class of categories including $\FI_G$ and $\VI_q$ as specific examples. Furthermore, via the Nakayama functor, the above equivalence for $\FI$ becomes transparent in our approach, compared with the one in \cite{ss2}.

We briefly describe the essential idea of our approach. Let $\C$ be a small EI-category (EI means that every endomorphism is an isomorphism) satisfying certain finiteness conditions. One can define the \emph{Nakayama functor} $\nu$ and \emph{inverse Nakayama functor} $\nu^{-1}$
\begin{equation*}
\xymatrix{ \C\mbox{\rm{-mod}} \ar[rr]<.5ex>^{\nu} && \C\mbox{\rm{-fdmod}} \ar[ll]<0.5ex>^{\nu^{-1}}  }
\end{equation*}
between the category $\C\mbox{\rm{-mod}}$ of finitely generated $\C$-modules and the category $\C\mbox{\rm{-fdmod}}$ of finite-dimensional $\C$-modules. Note that $\nu$ and $\nu^{-1}$ form a pair of adjoint functors, and furthermore, they give rise to an equivalence between the category of finitely generated projective $\C$-modules and the category of finite-dimensional injective modules. Under the assumption that $\C$ is \emph{locally self-injective} (that is, every finitely generated projective $\C$-module is also injective), $\nu$ is an exact functor, and $\nu \circ \nu^{-1} $ is isomorphic to the identity functor on $\C\mbox{\rm{-fdmod}}$. Therefore, by a classical result of Gabriel (\cite[Proposition III.2.5]{gabriel}), the kernel of $\nu$ is a localizing subcategory of $\C\mbox{\rm{-mod}}$, and one obtains the following commutative diagram in which $\bar{\nu}$ and $\bar{\nu}^{-1}$ are quasi-inverse to each other, $\rm{loc}$ is the localization functor, and $\rm{sec}$ is a section functor:
\begin{equation*}
\xymatrix{
\Ker(\nu) \ar[d]^{\rm{inc}}\\
\C\mbox{\rm{-mod}} \ar[rr]<.5ex>^{\nu} \ar[dd]<.5ex>^{\rm{loc}} & & \C\mbox{\rm{-fdmod}} \ar[ll]<.5ex>^{\nu^{-1}} \ar@{-->}[lldd]<.5ex>^{\bar{\nu}^{-1}}\\
 \\
\C\mbox{\rm{-mod}} / \Ker(\nu) \ar[uu]<.5ex>^{\rm{sec}} \ar@{-->}[rruu]<.5ex>^{\bar{\nu}}
}
\end{equation*}

We then consider $\Ker(\nu)$. If every morphism in $\C$ is a monomorphism, then $\C\mbox{\rm{-fdmod}} \subseteq \Ker(\nu)$. We also give two equivalent characterizations such that $\Ker(\nu) \subseteq \C\mbox{\rm{-fdmod}}$. When the category $\C$ satisfies these conditions (for instances, $\C$ is a skeleton of $\FI_G$ or $\VI_q$), the above commutative diagram becomes
\begin{equation*}
\xymatrix{
\C\mbox{\rm{-fdmod}} \ar[d]^{\rm{inc}}\\
\C\mbox{\rm{-mod}} \ar[rr]<.5ex>^{\nu} \ar[dd]<.5ex>^{\rm{loc}} & & \C\mbox{\rm{-fdmod}} \ar[ll]<.5ex>^{\nu^{-1}} \ar@{-->}[lldd]<.5ex>^{\bar{\nu}^{-1}}\\
 \\
\C\mbox{\rm{-mod}} / \C\mbox{\rm{-fdmod}} \ar[uu]<.5ex>^{\rm{sec}} \ar@{-->}[rruu]<.5ex>^{\bar{\nu}}
}
\end{equation*}
Thus we get what we want.

The paper is organized as follows. In Section \ref{preliminaries}, we collect basic results on Nakayama functor. In Section \ref{main result} and  Section \ref{question}, we prove our main result and consider its application to both $\FI_G$ and $\VI_q$ in representation stability theory, respectively.

\section{Preliminaries} \label{preliminaries}

The Nakayama functor is well known in the representation theory of finite-dimensional algebras (see, for example, \cite{ars}, \cite{sy}). Most of the proofs in this section are standard, so we leave the details to the reader.

\subsection{Notations }
An EI-category is a small category in which every endomorphism is an isomorphism. Let $\C$ be a skeletal EI-category, and $I$ be the set of objects of $\C$. For any $i,j\in I$, we write $\C(i,j)$ for the set of morphisms in $\C$ from $i$ to $j$. Recall that there is a partial order on $I$ defined by $i\leqslant j$ if $\C(i,j)$ is nonempty.

We fix a field $\Bbbk$. A left (respectively, right) $\C$-module is a covariant (respectively, contravariant) functor $W$ from $\C$ to the category of $\Bbbk$-vector spaces.

For any $i,j\in I$, denote by $\Bbbk \C(i,j)$ the vector space with basis $\C(i,j)$. Denote by $A=\bigoplus_{i,j\in I} \Bbbk\C(i,j)$ the category algebra of $\C$. The associative algebra $A$ is non-unital if $I$ is an infinite set. Denote by $e_i\in \C(i,i)$ the identity endomorphism of $i$. We say that a left (respectively, right) $A$-module $V$ is \emph{graded} if $V = \bigoplus_{i\in I} e_i V$ (respectively, $V=\bigoplus_{i\in I} V e_i$). If $W$ is a left (respectively, right) $\C$-module, then $\bigoplus_{i\in I} W(i)$ has a natural structure of a graded left (respectively, right) $A$-module. Conversely, any graded left (respectively, right) $A$-module naturally defines a left (respectively, right) $\C$-module. Thus, we shall not distinguish left (respectively, right) $\C$-modules from graded left (respectively, right) $A$-modules.

A (left or right) $\C$-module is finitely generated (respectively, finite-dimensional) if it is
finitely generated (respectively, finite-dimensional) as a graded $A$-module. We write $\C\mbox{-mod}$ (respectively, $\C^{\op}\mbox{-mod}$) for the category of finitely generated left (respectively, right) $\C$-modules; and $\C\mbox{-fdmod}$ (respectively, $\C^{\op}\mbox{-fdmod}$) for the full subcategory of $\C\mbox{-mod}$ (respectively, $\C^{\op}\mbox{-mod}$) whose objects are the finite-dimensional left (respectively, right) $\C$-modules. On the categories of finite-dimensional modules, there is the \emph{standard duality functor} $D:=\Hom_\Bbbk ( - , \Bbbk)$:
\begin{equation*}
\xymatrix{ \C\mbox{-fdmod} \ar[rr]<.5ex>^{D} && \C^{\op}\mbox{-fdmod} \ar[ll]<.5ex>^{D} }
\end{equation*}

\subsection{Finiteness conditions and Nakayama functor}

We first recall from \cite[Section 2]{ss1} that the category $\C$ is said to be
\emph{inwards finite} if, for each $j\in I$, there are only finitely many $i\in I$ such that $\C(i,j)$ is nonempty; and \emph{ hom-finite } if $\C(i,j)$ is a finite set for every $i, j\in I$.
Further, the category $\C$ is called \emph{locally noetherian} if every $\C$-submodule of each finitely generated left $\C$-module is also finitely generated.

{\bf From now on, we always assume that $\C$ is an inwards finite, hom-finite, and locally noetherian EI-category.}

Since $\C$ is inwards finite and hom-finite, the right projective $\C$-module $e_j A$ is finite-dimensional for each $j\in I$. Hence every finitely generated right $\C$-module is finite-dimensional.

Next, we introduce the Nakayama functor on $\C$-modules.

The category algebra $A$ of $\C$ is an $A$-bimodule which is graded as both a left  $A$-module and a right $A$-module. If $V$ (respectively, $W$) is a left (respectively, right) $\C$-module, then $\Hom_{\C} (V, A)$ (respectively, $\Hom_{\C^{\op}} (W, A)$) is a right (respectively, left) $A$-module. If, moreover, $V$ (respectively, $W$) is \emph{finitely generated}, then $\Hom_{\C} (V, A)$ (respectively, $\Hom_{\C^{\op}} (W, A)$) is graded, that is,
\begin{equation} \label{hom to direct sum}
\Hom_{\C} (V, A) \cong \bigoplus_{i\in I} \Hom_{\C} (V, A e_i)\quad
\Big(\mbox{respectively, }\Hom_{\C^{\op}} (W, A) \cong \bigoplus_{j\in I} \Hom_{\C^{\op}} (W, e_j A) \Big);
\end{equation}
see \cite[VIII.1.15]{js}. (The proof of the claim below (1) in \cite[VIII.1.15]{js} remains valid for our algebra $A$ even though $A$ might be non-unital.)

Without any reference, we shall use the following well-known fact: for any idempotent $e\in A$, there hold:
\begin{equation} \label{idempotent}
\Hom_{\C} (Ae, A)_A \cong eA_A \qquad\mbox{ and }\qquad {}_A\Hom_{\C^{\op}} (eA, A) \cong {}_AAe.
\end{equation}

\begin{lemma} \label{fd}
If $V$ is a finitely generated left $\C$-module, then $\Hom_\C (V,A)$ is a finite-dimensional right $\C$-module.
\end{lemma}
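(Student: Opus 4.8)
The plan is to realize $\Hom_\C(V,A)$ as a right $\C$-submodule of a manifestly finite-dimensional right $\C$-module, and then conclude by the trivial fact that a subspace of a finite-dimensional $\Bbbk$-space is finite-dimensional (this is, in effect, the remark already recorded above that finitely generated right $\C$-modules are finite-dimensional).

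First, since $V$ is finitely generated I would choose finitely many homogeneous generators, say $v_k \in e_{i_k}V$ for $k = 1, \dots, n$, and form the resulting surjection of left $\C$-modules $\pi\colon P := \bigoplus_{k=1}^n Ae_{i_k} \twoheadrightarrow V$, $\,ae_{i_k} \mapsto av_k$. Applying the contravariant functor $\Hom_\C(-,A)$ and using its left exactness, the induced map $\Hom_\C(\pi, A)\colon \Hom_\C(V,A) \to \Hom_\C(P,A)$ is an injective homomorphism of right $A$-modules. By additivity of $\Hom_\C(-,A)$ together with the identification \eqref{idempotent}, there is an isomorphism of right $A$-modules $\Hom_\C(P,A) \cong \bigoplus_{k=1}^n \Hom_\C(Ae_{i_k},A) \cong \bigoplus_{k=1}^n e_{i_k}A$.

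Next I would account for the gradings. Both $P$ and $V$ are finitely generated, so by \eqref{hom to direct sum} each of $\Hom_\C(V,A)$ and $\Hom_\C(P,A)$ is graded, i.e.\ is an honest right $\C$-module; and any homomorphism of right $A$-modules between graded modules restricts to maps $Me_i \to Ne_i$ on homogeneous components, hence is a morphism of right $\C$-modules. Thus $\Hom_\C(\pi,A)$ exhibits $\Hom_\C(V,A)$ as (isomorphic to) a right $\C$-submodule of $\bigoplus_{k=1}^n e_{i_k}A$. Since $\C$ is inwards finite and hom-finite, each $e_{i_k}A = \bigoplus_{i\in I}\Bbbk\C(i,i_k)$ is finite-dimensional over $\Bbbk$, as already observed; therefore $\bigoplus_{k=1}^n e_{i_k}A$ is finite-dimensional, and so is its subspace $\Hom_\C(V,A)$.

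The steps are all routine; the one point needing care is the bookkeeping with gradings — namely that for finitely generated $V$ the right $A$-module $\Hom_\C(V,A)$ genuinely equals $\bigoplus_{i\in I}\Hom_\C(V,Ae_i)$ rather than a completion of it, which is precisely the content of \eqref{hom to direct sum} as quoted from \cite{js}, and that this identification is compatible with the map induced by $\pi$. Granting that, the argument closes.
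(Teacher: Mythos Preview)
Your argument is correct. The paper takes a slightly more direct route: rather than presenting $V$ by a finitely generated projective and embedding $\Hom_\C(V,A)$ into $\bigoplus_k e_{i_k}A$, it works componentwise via \eqref{hom to direct sum}, observing that any $\phi\in\Hom_\C(V,Ae_i)$ must send each generator $v_k\in e_{j_k}V$ into $e_{j_k}Ae_i = \Bbbk\C(i,j_k)$, so $\Hom_\C(V,Ae_i)=0$ unless $i\leqslant j_k$ for some $k$; inwards finiteness then gives only finitely many nonzero components, and hom-finiteness makes each finite-dimensional. Your approach packages the same finiteness facts through the functorial machinery (left exactness of $\Hom_\C(-,A)$ and the identification \eqref{idempotent}), which has the virtue of being exactly parallel to the paper's proof of the companion Lemma~\ref{fg}; the paper's approach avoids invoking left exactness and the idempotent identification at the cost of a small ad hoc computation. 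Either way the content is the same and equally routine.
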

\begin{proof}
Suppose that $V$ is generated by $v_1, \ldots, v_s$ where $v_1\in V(j_1), \dots, v_s\in V(j_s)$. Then $\Hom_\C(V, Ae_i)=0$ if $\C(i,j_1),\ldots, \C(i,j_s)$ are all empty sets. Since $\C$ is inwards finite, there are only finitely many $i\in I$ such that $\Hom_\C(V, Ae_i)\ne 0$. Since $\C$ is hom-finite, each $\Hom_\C(V, Ae_i)$ is finite-dimensional.
\end{proof}

\begin{lemma} \label{fg}
If $W$ is a finite-dimensional right $\C$-module, then $\Hom_{\C^{\op}} (W, A)$ is a finitely generated left $\C$-module.
\end{lemma}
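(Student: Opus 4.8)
The plan is to exhibit $\Hom_{\C^{\op}}(W, A)$ as a $\C$-submodule of a finitely generated projective left $\C$-module and then to invoke the local noetherian hypothesis. Since $W$ is finite-dimensional, it is in particular finite-dimensional over $\Bbbk$, hence finitely generated as a graded $A$-module; choose a finite set of homogeneous generators, say elements lying in $W(j_1), \dots, W(j_n)$ for (not necessarily distinct) objects $j_1, \dots, j_n \in I$. For each such generator $w \in W(j_k)$ there is a unique morphism of right $\C$-modules $e_{j_k} A \to W$ sending $e_{j_k}$ to $w$ (this is the defining property of the projective right $\C$-module $e_{j_k}A$, essentially Yoneda's lemma). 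Assembling these, one obtains an epimorphism of right $\C$-modules
\begin{equation*}
\pi \colon P := \bigoplus_{k=1}^{n} e_{j_k} A \longrightarrow W .
\end{equation*}

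Next I would apply the contravariant functor $\Hom_{\C^{\op}}(-, A)$ to $\pi$. Since $\pi$ is surjective, the induced map $\pi^{\ast} \colon \Hom_{\C^{\op}}(W, A) \to \Hom_{\C^{\op}}(P, A)$, $\varphi \mapsto \varphi \circ \pi$, is injective, and it is clearly a homomorphism of left $A$-modules. By \eqref{hom to direct sum}, both $\Hom_{\C^{\op}}(W, A)$ (because $W$ is finitely generated) and $\Hom_{\C^{\op}}(P, A) = \bigoplus_{k=1}^{n} \Hom_{\C^{\op}}(e_{j_k}A, A)$ are graded, and $\pi^{\ast}$ respects the grading; hence $\pi^{\ast}$ is a monomorphism of left $\C$-modules, so $\Hom_{\C^{\op}}(W, A) \cong \operatorname{im}(\pi^{\ast})$ is a $\C$-submodule of $\Hom_{\C^{\op}}(P, A)$.

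Finally, by \eqref{idempotent} we have $\Hom_{\C^{\op}}(e_{j_k}A, A) \cong A e_{j_k}$ as left $A$-modules, so
\begin{equation*}
\Hom_{\C^{\op}}(P, A) \cong \bigoplus_{k=1}^{n} A e_{j_k}
\end{equation*}
is a finite direct sum of the cyclic projective left $\C$-modules $A e_{j_k}$, hence a finitely generated left $\C$-module. Since $\C$ is locally noetherian, the $\C$-submodule $\operatorname{im}(\pi^{\ast}) \cong \Hom_{\C^{\op}}(W, A)$ of this module is finitely generated, which is exactly the assertion of the lemma.

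The only point requiring a little care — and the reason we appeal to \eqref{hom to direct sum} — is to make sure that all the Hom-spaces occurring are graded $A$-modules (equivalently, genuine $\C$-modules) rather than merely $A$-modules, so that ``submodule of a finitely generated module'' is understood in the sense of the local noetherian hypothesis; this is guaranteed precisely because $W$ is finitely generated (being finite-dimensional) and $P$ is a finite direct sum of the $e_{j_k}A$. Everything else is the formal manipulation of representable functors and idempotents, so I do not expect a genuine obstacle here.
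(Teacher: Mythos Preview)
Your proof is correct and follows essentially the same approach as the paper's: take a surjection $\bigoplus_{k} e_{j_k}A \twoheadrightarrow W$, apply $\Hom_{\C^{\op}}(-,A)$ to get an injection into $\bigoplus_{k} Ae_{j_k}$ via \eqref{idempotent}, and invoke local noetherianity. The paper's version is terser and does not pause over the gradedness issue you flag, but the argument is the same.
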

\begin{proof}
There exists a surjective homomorphism $e_{j_1} A \oplus \cdots \oplus e_{j_s} A \to W$ for some $j_1, \ldots, j_s \in I$. Applying the functor $\Hom_{\C^{\op}} ( - , A)$ and using \eqref{idempotent}, we obtain an injective homomorphism
\begin{equation*}
\Hom_{\C^{\op}} (W, A) \to Ae_{j_1} \oplus \cdots \oplus Ae_{j_s}.
\end{equation*}
Since $\C$ is locally noetherian, it follows that $\Hom_{\C^{\op}} (W, A)$ is finitely generated.
\end{proof}

By Lemmas \ref{fd} and \ref{fg}, we have a pair of contravariant functors
\begin{equation*}
\xymatrix{ \C\mbox{-mod} \ar[rrr]<.5ex>^{\Hom_{\C} (-, A)} &&& \C^{\op}\mbox{-fdmod} \ar[lll]<.5ex>^{\Hom_{\C^{\op}} (-, A)}}
\end{equation*}

\begin{definition}
The \emph{Nakayama functor} $\nu$ of $\C$ (or $A$) is defined to be the following composition:
\begin{equation*}
D \circ \Hom_{\C} ( - , A) : \C\mbox{-mod} \longrightarrow \C\mbox{-fdmod}.
\end{equation*}
The \emph{inverse Nakayama functor} $\nu^{-1}$ is defined to be the following composition:
\begin{equation*}
\Hom_{\C^{\op}} ( - , A) \circ D : \C\mbox{-fdmod} \longrightarrow \C\mbox{-mod}.
\end{equation*}
\end{definition}

Let us remark that the functor $\nu$ is a right exact covariant functor, while the functor $\nu^{-1}$ is a left exact covariant functor. But we should warn the reader that the functor $\nu^{-1}$ is, in general, neither the inverse nor a quasi-inverse of $\nu$.

\begin{lemma} \label{adjoint}
The pair $(\nu, \nu^{-1})$ is an adjoint pair of functors:
\begin{equation*}
\xymatrix{ \C\mbox{\rm{-mod}} \ar[rr]<.5ex>^{\nu} && \C\mbox{\rm{-fdmod}} \ar[ll]<.5ex>^{\nu^{-1}} . }
\end{equation*}
\end{lemma}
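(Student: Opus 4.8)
The plan is to derive the adjunction isomorphism $\Hom_{\C}(\nu V, W) \cong \Hom_{\C}(V, \nu^{-1}W)$, natural in $V \in \C\mbox{-mod}$ and $W \in \C\mbox{-fdmod}$, from the standard duality $D$ together with a ``Hom--Hom swap'' over the bimodule $A$. Fix $V \in \C\mbox{-mod}$ and $W \in \C\mbox{-fdmod}$. By Lemma \ref{fd} the module $\Hom_{\C}(V,A)$ is finite-dimensional, hence so is $\nu V = D\Hom_{\C}(V,A)$, and $W$ is finite-dimensional by hypothesis. Since $D$ restricts to a contravariant equivalence $\C\mbox{-fdmod} \simeq \C^{\op}\mbox{-fdmod}$ with $D^2 \cong \mathrm{id}$, applying $D$ to morphisms and using $D\nu V \cong \Hom_{\C}(V,A)$ gives a natural isomorphism
\[
\Hom_{\C}(\nu V, W) \;\cong\; \Hom_{\C^{\op}}\!\bigl(DW,\ \Hom_{\C}(V,A)\bigr).
\]
As $\nu^{-1}W = \Hom_{\C^{\op}}(DW, A)$ by definition, we have $\Hom_{\C}(V,\nu^{-1}W) = \Hom_{\C}\!\bigl(V, \Hom_{\C^{\op}}(DW,A)\bigr)$, so writing $U := DW \in \C^{\op}\mbox{-fdmod}$ it suffices to produce a natural isomorphism
\[
\Hom_{\C^{\op}}\!\bigl(U,\ \Hom_{\C}(V,A)\bigr) \;\cong\; \Hom_{\C}\!\bigl(V,\ \Hom_{\C^{\op}}(U,A)\bigr).
\]

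To establish this swap isomorphism I would identify both sides with the set $P(U,V)$ of $\Bbbk$-bilinear maps $\beta\colon U \times V \to A$ satisfying $\beta(ua,v) = \beta(u,v)a$ and $\beta(u,av) = a\beta(u,v)$ for all $a \in A$, $u \in U$, $v \in V$. Here $\Hom_{\C}(V,A)$ carries the right $A$-action $(fa)(v)=f(v)a$ and $\Hom_{\C^{\op}}(U,A)$ the left $A$-action $(ag)(u)=ag(u)$. Given $\theta \in \Hom_{\C^{\op}}(U,\Hom_{\C}(V,A))$, set $\beta_\theta(u,v) = \theta(u)(v)$; the condition that each $\theta(u)$ be a morphism of left $\C$-modules becomes $\beta_\theta(u,av)=a\beta_\theta(u,v)$, and right $A$-linearity of $\theta$ becomes $\beta_\theta(ua,v)=\beta_\theta(u,v)a$, so $\beta_\theta \in P(U,V)$. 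Symmetrically $\beta_\eta(u,v)=\eta(v)(u)$ lies in $P(U,V)$ for $\eta \in \Hom_{\C}(V,\Hom_{\C^{\op}}(U,A))$, and $\theta\mapsto\beta_\theta$ and $\eta\mapsto\beta_\eta$ are mutually inverse bijections onto $P(U,V)$, evidently natural in $U$ and $V$. Two routine points remain to be checked: that ``morphism of $\C$-modules'' may be read as ``morphism of $A$-modules'' here --- true because any $A$-module map $\phi$ between graded modules satisfies $\phi(ue_i)=\phi(u)e_i$ and hence respects the grading --- and that $\Hom_{\C}(V,A)$ and $\Hom_{\C^{\op}}(U,A)$ are themselves graded, which is \eqref{hom to direct sum} applied to the finitely generated modules $V$ and $U$ (the right module $U$ is finite-dimensional, hence finitely generated).

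Composing the two displayed isomorphisms then yields $\Hom_{\C}(\nu V, W) \cong \Hom_{\C}(V,\nu^{-1}W)$, natural in both variables, which is exactly the asserted adjunction $(\nu,\nu^{-1})$. I expect the only genuine obstacle to be the bookkeeping in the swap step: tracking which side each $A$-action lives on in $\Hom_{\C}(V,A)$ and $\Hom_{\C^{\op}}(U,A)$, and checking that the non-unitality of $A$ causes no difficulty --- it does not, since every module in sight is graded and the pertinent Hom-spaces split as in \eqref{hom to direct sum}.
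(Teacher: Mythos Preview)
Your proof is correct and follows essentially the same route as the paper: apply the duality $D$ to reduce to a Hom--Hom swap over the bimodule $A$, then identify both sides with a common object. The only cosmetic difference is that the paper names that common object as $\Hom_{A\text{-bimod}}(V \otimes_\Bbbk DU, A)$ and cites a reference for the tensor--Hom adjunction, whereas you spell out the set of balanced bilinear maps $P(U,V)$ and the bijections by hand; these are the same computation.
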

\begin{proof}
Let $V\in \Ob(\C\mbox{-mod})$ and $U\in \Ob( \C\mbox{-fdmod} )$. Since $V$ is a left $A$-module and $DU$ is a right $A$-module, the tensor product $V\otimes_{\Bbbk} DU$ is an $A$-bimodule. One has the following canonical isomorphisms:
\begin{multline*}
\Hom_\C (D \Hom_\C (V,A) , U ) = \Hom_{\C^{\op}} (DU, \Hom_\C (V,A)) = \Hom_{A\text{-bimod}} (V\otimes_{\Bbbk} DU, A) \\ = \Hom_\C (V,  \Hom_{\C^{\op}} (DU , A) );
\end{multline*}
see \cite[Exercise XI.6.6]{grillet}.
\end{proof}

\subsection{Projectives and finite-dimensional injectives}
Denote by $\C\mbox{-proj}$ the full subcategory of $\C\mbox{-mod}$ whose objects are the finitely generated projective left $\C$-modules. Denote by $\C\mbox{-fdinj}$ the full subcategory of $\C\mbox{-fdmod}$ whose objects are the finite-dimensional injective left $\C$-modules.

\begin{lemma} \label{projectives}
(1) Every finitely generated projective left $\C$-module is a finite direct sum of indecomposable projective left $\C$-modules.

(2) Let $V$ be a finitely generated left $\C$-module. Then $V$ is an indecomposable projective left $\C$-module if and only if $V$ is isomorphic to $Ae$ for some primitive idempotent $e\in e_iAe_i$ with $i\in I$.
\end{lemma}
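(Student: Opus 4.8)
The plan is to reduce everything to decomposing the regular module $A$, exactly as in the finite-dimensional case, but being careful about the grading since $A$ may be non-unital. For part (1), let $V$ be a finitely generated projective left $\C$-module. Since $V$ is finitely generated, there is a surjection $Ae_{i_1} \oplus \cdots \oplus Ae_{i_n} \to V$, and projectivity makes $V$ a direct summand of $Ae_{i_1} \oplus \cdots \oplus Ae_{i_n}$. So it suffices to know that each $Ae_i$ decomposes as a finite direct sum of indecomposables with local endomorphism rings (this gives uniqueness as well, though the statement only asks for existence), and then apply Krull--Schmidt: a direct summand of a finite direct sum of modules with local endomorphism rings is again such a finite direct sum. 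The decomposition of $Ae_i$ itself comes from decomposing the identity $e_i$ of the (unital) ring $e_iAe_i$ into primitive orthogonal idempotents $e_i = f_1 + \cdots + f_r$; since $e_iAe_i$ is a finite-dimensional $\Bbbk$-algebra (as $\C$ is hom-finite, $\C(i,i)$ is finite), such a decomposition exists, and it yields $Ae_i = Af_1 \oplus \cdots \oplus Af_r$.

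For part (2), the ``if'' direction: suppose $e \in e_iAe_i$ is primitive. Then $Ae$ is projective, being a direct summand of $Ae_i$, which is a direct summand of $A$. To see $Ae$ is indecomposable, I would identify $\End_{\C}(Ae)^{\op} \cong eAe$ via \eqref{idempotent} (with the idempotent $e$), and note that $eAe = e(e_iAe_i)e$ is a local ring precisely because $e$ is a primitive idempotent in the finite-dimensional algebra $e_iAe_i$; a module with local endomorphism ring is indecomposable. For the ``only if'' direction: let $V$ be a finitely generated indecomposable projective left $\C$-module. By part (1) (or directly by the summand argument above), $V$ is a direct summand of some $Ae_{i_1} \oplus \cdots \oplus Ae_{i_n}$, hence a direct summand of a finite direct sum of modules $Af_k$ with $f_k$ primitive in some $e_{i}Ae_{i}$. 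Each $Af_k$ has local endomorphism ring by the computation just described, so by Krull--Schmidt $V$ is isomorphic to one of the $Af_k$, i.e.\ to $Ae$ for a primitive idempotent $e \in e_iAe_i$.

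The only real subtlety is making sure the standard module-theoretic machinery applies when $A$ is non-unital. The key point is that although $A$ itself need not be a projective object in $\C\mbox{-mod}$ (it need not even be finitely generated), each $e_iA$-column, viewed correctly, behaves well: for a \emph{fixed} $i$, the idempotent $e_i$ is a genuine unit of the finite-dimensional algebra $e_iAe_i$, and $Ae_i$ is a genuine finitely generated projective graded $A$-module, so $\End_\C(Ae_i) \cong e_iAe_i$ is finite-dimensional and the Krull--Schmidt theorem applies to finite direct sums of such modules without any need for $A$ to be unital. I expect this bookkeeping — confirming that \eqref{idempotent} and the Hom-grading \eqref{hom to direct sum} give the endomorphism-ring identification in the non-unital setting — to be the main (and only) obstacle, and it is handled by the references already cited (\cite[VIII.1.15]{js} and the remark following it).
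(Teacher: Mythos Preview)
Your argument is correct and is essentially the standard Krull--Schmidt argument one finds in the cited sources; the paper itself does not give any details, simply invoking \cite[Theorem I.11.18]{dieck} for (1) and \cite[Proposition I.8.2]{sy} for (2). Your write-up is a faithful unpacking of those references, with appropriate care taken over the non-unital/graded bookkeeping (the identification $\End_\C(Ae)\cong eAe = e(e_iAe_i)e$ and the finite-dimensionality of $e_iAe_i$ are exactly the points that make the classical argument go through here).
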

\begin{proof} The two statement follows from \cite[Theorem I.11.18]{dieck} and \cite[Proposition I.8.2]{sy}, respectively.
\end{proof}

\begin{definition}
(1) A full subcategory $\C'$ of $\C$ is said to be \emph{right-closed} if, for any $i,j\in \Ob(\C)$, we have $i\in \Ob(\C')$ whenever $i\leqslant j$ for some $j\in \Ob(\C')$.

(2) The \emph{support} of a (left or right) $\C$-module $V$ is the set of all $i\in I$ such that $V(i)$ is nonzero, where $V(i)$ is the image of $i$ under the functor $V$.
\end{definition}

By definition, we have the following trivial observation.
\begin{lemma} \label{subcategory}
Let $\C'$ be a right-closed subcategory of $\C$. If $V$ is an injective left $\C$-module whose support is contained in $\Ob(\C')$, then restricting $V$ to $\C'$ gives an injective left $\C'$-module. If $V'$ is an injective left $\C'$-module, then extending $V'$ to $\C$ by zero on $\Ob(\C)\setminus \Ob(\C')$ gives an injective left $\C$-module.
\end{lemma}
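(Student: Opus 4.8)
The plan is to present the two operations in the statement as an adjoint pair of exact functors and then to read off both assertions from formal properties. Write $\rho$ for restriction along the inclusion $\C'\hookrightarrow\C$, from the category of all left $\C$-modules to the category of all left $\C'$-modules, and $\varepsilon$ for extension by zero, sending a left $\C'$-module $V'$ to the left $\C$-module which equals $V'(k)$ on objects $k\in\Ob(\C')$ and is $0$ on objects $k\notin\Ob(\C')$. The first thing I would check is that $\varepsilon$ is well defined as a functor: on a morphism $f\in\C(k,l)$ one is forced to put $\varepsilon V'(f)=V'(f)$ if $k,l\in\Ob(\C')$ and $\varepsilon V'(f)=0$ if one of $k,l$ lies outside $\Ob(\C')$, and the only configuration that could destroy functoriality, namely $k\notin\Ob(\C')$ with $l\in\Ob(\C')$, is ruled out exactly by right-closedness of $\C'$, since such an $f$ would give $k\leqslant l$ and hence $k\in\Ob(\C')$. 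Once $\varepsilon$ is known to be a functor it is clearly exact, as is $\rho$ (exactness of sequences of modules is tested objectwise), and $\rho\circ\varepsilon$ is isomorphic to the identity functor on left $\C'$-modules, so $\varepsilon$ is moreover fully faithful.

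Next I would verify that $(\rho,\varepsilon)$ is an adjoint pair: a homomorphism $\rho V\to V'$ extends uniquely to a homomorphism $V\to\varepsilon V'$ by adjoining the zero maps $V(k)\to 0$ for $k\notin\Ob(\C')$, naturality being automatic for the same reason as above, and the inverse of this bijection is restriction; hence $\Hom_{\C'}(\rho V,V')\cong\Hom_\C(V,\varepsilon V')$ naturally in $V$ and $V'$. Since $\rho$ is exact, its right adjoint $\varepsilon$ sends injective left $\C'$-modules to injective left $\C$-modules, which is the second assertion of the lemma.

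For the first assertion I would argue as follows. Let $V$ be an injective left $\C$-module with support contained in $\Ob(\C')$. The unit morphism $V\to\varepsilon\rho V$ of the adjunction is the objectwise identity on the objects of $\C'$ and the zero map $V(k)\to 0$ on $\Ob(\C)\setminus\Ob(\C')$; the support hypothesis is precisely the statement that this unit is an isomorphism, so $V\cong\varepsilon(\rho V)$. Now choose a monomorphism $\rho V\hookrightarrow J$ with $J$ an injective left $\C'$-module (the category of all left $\C'$-modules, being a category of functors to $\Bbbk$-vector spaces, has enough injectives). Applying the exact functor $\varepsilon$ gives a monomorphism $V\cong\varepsilon(\rho V)\hookrightarrow\varepsilon J$ whose target is injective by the previous paragraph, and it splits because $V$ is injective. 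Thus $V$ is a direct summand of $\varepsilon J$, and applying $\rho$ and using $\rho\varepsilon\cong\mathrm{id}$ shows that $\rho V$ is a direct summand of $\rho\varepsilon J\cong J$, hence an injective left $\C'$-module.

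No step is genuinely hard here, which is consistent with the statement being flagged as a trivial observation; but if any part carries weight it is the bookkeeping in the first paragraph. Right-closedness of $\C'$ is exactly the hypothesis that turns extension by zero into a well-defined exact functor that is the \emph{right} (not left) adjoint of restriction, and, correspondingly, the support hypothesis in the first part is exactly the isomorphism $V\cong\varepsilon\rho V$ that makes the splitting argument run; without it, restriction by itself does not preserve injectivity.
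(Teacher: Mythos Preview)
Your argument is correct. The adjunction $(\rho,\varepsilon)$ is set up cleanly, right-closedness is invoked at exactly the right moment to guarantee functoriality of $\varepsilon$ and to make the naturality squares in the adjunction automatic, and the two halves of the lemma then fall out from the standard facts that a right adjoint of an exact functor preserves injectives and that a direct summand of an injective is injective.

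As for comparison with the paper: there is essentially nothing to compare. The paper records the lemma as a ``trivial observation'' following from the definitions and gives no argument at all. Your write-up supplies the details the paper suppresses and does so via a clean abstract route; in particular, by packaging the first assertion through the unit isomorphism $V\cong\varepsilon\rho V$ and a splitting inside $\varepsilon J$, you avoid any ad hoc verification and make transparent why the support hypothesis is exactly what is needed. The one remark I would trim is the aside that $\varepsilon$ is fully faithful: it is true (since the counit $\rho\varepsilon\to\mathrm{id}$ is the identity), but you never use it, so it is a small distraction.
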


\begin{lemma} \label{fd injectives}
Let $U$ be a finite-dimensional left $\C$-module. Then $U$ is an indecomposable injective left $\C$-module if and only if $U$ is isomorphic to $D(eA)$ for some primitive idempotent $e\in e_iAe_i$ with $i\in I$.
\end{lemma}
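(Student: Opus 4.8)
The plan is to transport the statement to right $\C$-modules via the standard duality $D$, under which ``injective'' becomes ``projective'' and the classification is already available from the right-module analogue of Lemma \ref{projectives}. Recall that $D\colon \C\mbox{-fdmod}\to \C^{\op}\mbox{-fdmod}$ is an exact contravariant functor with $D\circ D\cong \mathrm{id}$ on both sides, hence a duality of abelian categories; in particular $\mathrm{End}_{\C}(U)\cong \mathrm{End}_{\C^{\op}}(DU)^{\op}$, so $U$ is indecomposable if and only if $DU$ is. Moreover, since $\C$ is inwards finite and hom-finite, every finitely generated right $\C$-module is finite-dimensional, so $\C^{\op}\mbox{-mod}=\C^{\op}\mbox{-fdmod}$; consequently the argument of Lemma \ref{projectives} applies verbatim to right $\C$-modules (the only standing hypothesis needed for that classification being local noetherianity, which is automatic here). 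Thus $DU$ is an indecomposable projective right $\C$-module if and only if $DU\cong eA$ for some primitive idempotent $e\in e_iAe_i$ with $i\in I$, and then, applying $D$ once more and using $D\circ D\cong\mathrm{id}$ on $\C^{\op}\mbox{-fdmod}$, one gets $U\cong D(eA)$.

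So the real content is the claim that a finite-dimensional left $\C$-module $U$ is injective if and only if $DU$ is a projective right $\C$-module. I would first verify that $D(e_jA)$ is injective for each $j\in I$: combining the tensor--hom adjunction used in the proof of Lemma \ref{adjoint} with the canonical isomorphism $e_jA\otimes_A V\cong V(j)$, one obtains a natural isomorphism $\Hom_{\C}(V, D(e_jA))\cong D(V(j))$, and $V\mapsto V(j)$ is exact; hence $D(e_jA)$ is injective, and so is any direct summand of a finite direct sum of modules of this form. Now if $DU$ is projective it is a direct summand of some $P=e_{j_1}A\oplus\cdots\oplus e_{j_s}A$, so $U\cong D(DU)$ is a direct summand of $D(P)=D(e_{j_1}A)\oplus\cdots\oplus D(e_{j_s}A)$, which is injective. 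Conversely, if $U$ is injective, choose a surjection $P=e_{j_1}A\oplus\cdots\oplus e_{j_s}A\twoheadrightarrow DU$ (possible since $DU$ is finitely generated) and apply $D$ to get an embedding $U\cong D(DU)\hookrightarrow D(P)$; this splits because $U$ is injective, so $U$ is a direct summand of $D(P)$, and applying $D$ again exhibits $DU$ as a direct summand of $D(D(P))\cong P$, hence projective.

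Putting these together, $U$ is an indecomposable injective finite-dimensional left $\C$-module if and only if $DU$ is an indecomposable projective right $\C$-module, if and only if $DU\cong eA$ for a primitive idempotent $e\in e_iAe_i$, if and only if $U\cong D(eA)$. I expect the step needing the most care to be the equivalence ``$U$ injective $\iff$ $DU$ projective'': one must fix the category in which injectivity is taken, confirm that $D(e_jA)$ is injective in that category, and make sure the splitting argument is legitimate there. The remaining ingredients---exactness of $D$, $D\circ D\cong\mathrm{id}$ on finite-dimensional modules, preservation of indecomposability, and the right-module version of Lemma \ref{projectives}---are routine.
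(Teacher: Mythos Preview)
Your proof is correct, but it takes a different route from the paper. The paper's one-line proof invokes Lemma~\ref{subcategory} together with \cite[Proposition I.8.19]{sy}: since $U$ is finite-dimensional, its support lies in a finite right-closed subcategory $\C'$, and by Lemma~\ref{subcategory} injectivity of $U$ as a $\C$-module is equivalent to injectivity of its restriction as a $\C'$-module; but the category algebra of $\C'$ is finite-dimensional, so the classical classification of indecomposable injectives over a finite-dimensional algebra applies directly. Your approach instead stays at the level of $\C$ and transports the problem across the duality $D$ to right $\C$-modules, where finitely generated coincides with finite-dimensional and the projective classification is immediate. The key extra work you do is the adjunction $\Hom_{\C}(V, D(e_jA))\cong D(V(j))$, which establishes injectivity of $D(e_jA)$ in the full category of left $\C$-modules without passing to a finite subcategory; this makes your argument self-contained and avoids the restriction/extension step, at the cost of reproving a small piece of what \cite{sy} packages. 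One minor remark: your parenthetical that the hypothesis needed for Lemma~\ref{projectives} is local noetherianity is not quite right---what is used is the idempotent decomposition of the endomorphism algebras $e_iAe_i$, and for right modules the relevant finiteness (that $e_jA$ is finite-dimensional) comes from inwards and hom-finiteness rather than noetherianity.
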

\begin{proof}
This follows from Lemma \ref{subcategory} and \cite[Proposition I.8.19]{sy}.
\end{proof}

\begin{corollary}  \label{equiv}
The functor $\nu$ gives an equivalence of categories
\begin{equation*}
\C\mbox{{-\rm{proj}}} \xrightarrow{\quad\sim\quad} \C\mbox{-\rm{fdinj}}
\end{equation*}
with a quasi-inverse given by the functor $\nu^{-1}$.
\end{corollary}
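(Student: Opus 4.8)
The plan is to realise $\nu$ and $\nu^{-1}$ as mutually quasi-inverse equivalences between $\C\mbox{-proj}$ and $\C\mbox{-fdinj}$ by first restricting their domains appropriately, and then invoking the adjunction of Lemma~\ref{adjoint} together with the standard criterion that, in an adjunction, the unit (respectively, counit) is a natural isomorphism exactly when the left (respectively, right) adjoint is fully faithful.

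First I would check that $\nu$ sends $\C\mbox{-proj}$ into $\C\mbox{-fdinj}$ and that $\nu^{-1}$ sends $\C\mbox{-fdinj}$ into $\C\mbox{-proj}$. By Lemma~\ref{projectives}, an object of $\C\mbox{-proj}$ is a finite direct sum of modules $Ae$ with $e\in e_iAe_i$ a primitive idempotent, and $\nu(Ae)=D\Hom_\C(Ae,A)\cong D(eA)$ by~\eqref{idempotent}, which is an indecomposable injective by Lemma~\ref{fd injectives}; additivity of $\nu$ then gives $\nu(\C\mbox{-proj})\subseteq\C\mbox{-fdinj}$. Dually, a finite-dimensional injective module is a finite direct sum of indecomposable injectives (it has finite length, and a direct summand of an injective is injective), hence of modules $D(eA)$ by Lemma~\ref{fd injectives}; since $eA$ is finite-dimensional we have $DD(eA)\cong eA$, so $\nu^{-1}(D(eA))=\Hom_{\C^{\op}}(DD(eA),A)\cong\Hom_{\C^{\op}}(eA,A)\cong Ae$ by~\eqref{idempotent}, which is projective by Lemma~\ref{projectives}. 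With these two containments in hand, the adjunction isomorphism of Lemma~\ref{adjoint} (the Hom-sets being unchanged since these are full subcategories) restricts to an adjunction between $\nu\colon\C\mbox{-proj}\to\C\mbox{-fdinj}$ and $\nu^{-1}\colon\C\mbox{-fdinj}\to\C\mbox{-proj}$.

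It then remains to prove that these restricted functors are fully faithful; by the criterion quoted above the unit $\mathrm{id}\to\nu^{-1}\nu$ and the counit $\nu\nu^{-1}\to\mathrm{id}$ will automatically be isomorphisms on the respective subcategories, which is exactly the asserted equivalence with quasi-inverse $\nu^{-1}$ (in particular essential surjectivity comes for free). Writing $\nu=D\circ\Hom_\C(-,A)$ and using that the duality $D$ is fully faithful on finite-dimensional modules, it suffices to show that $\Hom_\C(-,A)$ is fully faithful on $\C\mbox{-proj}$; since $\nu$ is additive and Hom-sets decompose accordingly, this reduces to showing that for primitive idempotents $e,f$ the natural map
\[
\Hom_\C(Ae,Af)\longrightarrow\Hom_{\C^{\op}}\big(\Hom_\C(Af,A),\,\Hom_\C(Ae,A)\big)
\]
is bijective, where the identifications $\Hom_\C(Ae,V)\cong eV$ and~\eqref{idempotent} turn both sides into $eAf$ and the map into the identity. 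The same argument applied to $\Hom_{\C^{\op}}(-,A)$ on finitely generated projective right $\C$-modules, composed with the full faithfulness of $D$, handles $\nu^{-1}$. The point requiring the most care is precisely this last verification — that the classical duality between finitely generated projective left and right modules remains valid for the possibly non-unital category algebra $A$; everything else in the argument is formal.
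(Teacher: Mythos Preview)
Your proof is correct and follows essentially the same approach as the paper: both arguments decompose objects of $\C\mbox{-proj}$ and $\C\mbox{-fdinj}$ into indecomposables via Lemmas~\ref{projectives} and~\ref{fd injectives}, and then use~\eqref{idempotent} to compute $\nu$ and $\nu^{-1}$ explicitly on these pieces. The paper's proof is a single sentence citing exactly these three ingredients; your version unpacks the same content, additionally invoking the adjunction of Lemma~\ref{adjoint} and the standard unit/counit criterion to organise the verification of full faithfulness, but this is a packaging choice rather than a different idea.
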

\begin{proof}
This follows immediately from \eqref{idempotent}, Lemmas \ref{projectives} and \ref{fd injectives}.
\end{proof}

\begin{definition}
We say that $\C$ is \emph{locally self-injective} if $Ae_i$ is an injective left $\C$-module for every $i\in I$.
\end{definition}

Clearly, $\C$ is locally self-injective if and only if every finitely generated projective left $\C$-module is injective.

\begin{lemma} \label{exact}
Suppose that $\C$ is locally self-injective. Then the Nakayama functor $\nu: \C\mbox{-\rm{mod}} \longrightarrow \C\mbox{-\rm{fdmod}}$ is exact.
\end{lemma}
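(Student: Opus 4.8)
The plan is to reduce the exactness of $\nu$ to the surjectivity of a restriction map, and then to deduce that surjectivity from local self-injectivity applied one summand at a time. Since $\nu$ is already right exact, it is enough to show that it preserves monomorphisms, i.e.\ that it sends every short exact sequence $0 \to V' \to V \to V'' \to 0$ in $\C\mbox{-mod}$ to a short exact sequence. Because $D$ is an exact duality between $\C\mbox{-fdmod}$ and $\C^{\op}\mbox{-fdmod}$ and $\nu = D \circ \Hom_\C(-, A)$, I would first reduce this to showing that the contravariant functor $\Hom_\C(-, A) \colon \C\mbox{-mod} \to \C^{\op}\mbox{-fdmod}$ sends short exact sequences to short exact sequences; as left exactness is automatic, the task becomes: for every monomorphism $\iota \colon V' \hookrightarrow V$ in $\C\mbox{-mod}$, the restriction map $\iota^* \colon \Hom_\C(V, A) \to \Hom_\C(V', A)$ is surjective.

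To prove this, I would take an arbitrary $\varphi \in \Hom_\C(V', A)$. Since $V'$ is finitely generated, \eqref{hom to direct sum} together with Lemma \ref{fd} shows that $\Hom_\C(V', A) = \bigoplus_{i \in S} \Hom_\C(V', Ae_i)$ for some finite subset $S \subseteq I$; consequently the image of $\varphi$ is contained in $\bigoplus_{i \in S} Ae_i$, so $\varphi = j \circ \varphi'$ where $j \colon \bigoplus_{i \in S} Ae_i \hookrightarrow A$ is the inclusion and $\varphi' \colon V' \to \bigoplus_{i \in S} Ae_i$. Now $\bigoplus_{i \in S} Ae_i$ is a \emph{finite} direct sum of the injective modules $Ae_i$ (injective because $\C$ is locally self-injective), hence is itself an injective left $\C$-module. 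Extending $\varphi'$ along the monomorphism $\iota$ yields $\psi \colon V \to \bigoplus_{i \in S} Ae_i$ with $\psi \circ \iota = \varphi'$, and then $j \circ \psi \in \Hom_\C(V, A)$ restricts along $\iota$ to $j \circ \varphi' = \varphi$. This proves $\iota^*$ is surjective, and the lemma follows.

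The step I expect to be the main obstacle is the reduction from $A$ to the finite sub-sum $\bigoplus_{i \in S} Ae_i$. When $I$ is infinite the bimodule $A = \bigoplus_{i \in I} Ae_i$ is an infinite direct sum and therefore need not be an injective left $\C$-module, even though each $Ae_i$ is; what rescues the argument is that $\Hom_\C(V', A)$ is graded precisely because $V'$ is finitely generated, which confines the image of any homomorphism $V' \to A$ to finitely many of the summands $Ae_i$, where injectivity is available. Everything else is formal.
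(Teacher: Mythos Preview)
Your proof is correct and follows essentially the same approach as the paper's: both arguments rest on the decomposition \eqref{hom to direct sum} together with the injectivity of each $Ae_i$. The only difference is presentational---the paper phrases it functorially (each $\Hom_\C(-,Ae_i)$ is exact, hence so is their direct sum $\bigoplus_{i\in I}\Hom_\C(-,Ae_i)\cong \Hom_\C(-,A)$, and then compose with the exact $D$), whereas you unpack the same content by hand, restricting a given $\varphi$ to a finite sub-sum $\bigoplus_{i\in S}Ae_i$ and invoking injectivity there to extend.
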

\begin{proof} 
Since $Ae_i$ is an injective left $\C$-module for each $i\in I$, the functor $\bigoplus_{i\in I} \Hom_\C( - , A e_i) $ is exact. It follows from \eqref{hom to direct sum} and the exactness of $D$ that $\nu: \C\mbox{-\rm{mod}} \longrightarrow \C\mbox{-\rm{fdmod}}$ is exact.
\end{proof}

\section{Main result} \label{main result}

\subsection{Injective resolutions of finite-dimensional modules}
The partial order on $I$ induces a partial order on the set of objects of any full subcategory of $\C$.

\begin{lemma} \label{projective resolution}
Suppose that the characteristic of $\Bbbk$ is zero. Let $\C'$ be a right-closed subcategory of $\C$ such that $\Ob(\C')$ is a finite set. Let $\C''$ be the full subcategory of $\C'$ on the objects which are not maximal in $\Ob(\C')$. Let $W$ be a finite-dimensional right $\C$-module whose support is contained in $\Ob(\C')$. Then:

(1) The subcategory $\C''$ of $\C$ is right-closed.

(2) There exists a short exact sequence
\begin{equation*}
0 \to W' \to P \to W \to 0
\end{equation*}
where $P$ is a finite direct sum of right $\C$-modules of the form $eA$ with $e^2=e\in e_i A e_i$ and $i\in \Ob(\C')$, and where $W'$ is a finite-dimensional right $\C$-module whose support is contained in $\Ob(\C'')$.
\end{lemma}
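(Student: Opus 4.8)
The plan is to dispatch part (1) by an order-theoretic argument, and to construct the exact sequence in part (2) in two stages: first peel off the maximal objects of $\Ob(\C')$, then cover what remains.

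For (1): suppose $i\leqslant j$ with $j\in\Ob(\C'')$. Since $\Ob(\C'')\subseteq\Ob(\C')$ and $\C'$ is right-closed, $i\in\Ob(\C')$. If $i$ were maximal in $\Ob(\C')$, then $i\leqslant j$ with $j\in\Ob(\C')$ would force $i=j$ by maximality, contradicting that $j$ is non-maximal; hence $i$ is non-maximal, i.e. $i\in\Ob(\C'')$.

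For (2), if $\Ob(\C')=\emptyset$ then $W=0$ and we take $P=0$; otherwise let $M\subseteq\Ob(\C')$ be the finite, nonempty set of maximal objects, so $\Ob(\C'')=\Ob(\C')\setminus M$. The crux is that $W$ is rigid over each $i\in M$: there $W(i)=We_i$ is a module over $e_iAe_i=\Bbbk\C(i,i)$, which is semisimple since $\operatorname{char}\Bbbk=0$; thus $W(i)$ is a projective $e_iAe_i$-module, so $P_i:=W(i)\otimes_{e_iAe_i}e_iA$ is a direct summand of some $(e_iA)^{n_i}$, hence a finitely generated projective right $\C$-module. Decomposing $e_iA=\bigoplus_k f_kA$ with $f_k$ primitive orthogonal idempotents in $e_iAe_i$, the $f_kA$ are indecomposable with local endomorphism rings, so by Krull--Schmidt $P_i$ is a finite direct sum of modules $fA$ with $f^2=f\in e_iAe_i$. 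Evaluating, $P_i(j)=W(i)\otimes_{e_iAe_i}\Bbbk\C(j,i)$ equals $W(i)$ for $j=i$ and vanishes unless $j\leqslant i$; since $\C'$ is right-closed, $\operatorname{supp}(P_i)\subseteq\Ob(\C')$. The multiplication map $\mu_i\colon P_i\to W$, $x\otimes a\mapsto xa$, is a morphism of right $\C$-modules that restricts to the identity $W(i)\to W(i)$ at the object $i$ and is zero at every other object of $M$ (as $\C(i',i)=\emptyset$ for $i\neq i'$ in $M$). Put $P^{(1)}:=\bigoplus_{i\in M}P_i$ and $\mu:=(\mu_i)$; then $\mu$ is an isomorphism at each object of $M$, so $\overline W:=\operatorname{coker}\mu$ is a finite-dimensional right $\C$-module with $\overline W(i)=0$ for all $i\in M$, and since it is a quotient of $W$ we get $\operatorname{supp}(\overline W)\subseteq\operatorname{supp}(W)\subseteq\Ob(\C')$, hence $\operatorname{supp}(\overline W)\subseteq\Ob(\C'')$.

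It remains to cover $\overline W$. A finite homogeneous generating set of $\overline W$ has degrees in $\operatorname{supp}(\overline W)\subseteq\Ob(\C'')$, giving a surjection $q\colon Q\to\overline W$ from a finite direct sum $Q$ of modules $e_jA$ with $j\in\Ob(\C'')$; by part (1), $\operatorname{supp}(Q)\subseteq\Ob(\C'')$. Since $Q$ is projective, $q$ lifts along the projection $\pi\colon W\to\overline W$ to some $\tilde q\colon Q\to W$, and then $p:=(\mu,\tilde q)\colon P:=P^{(1)}\oplus Q\to W$ is surjective: given $w\in W$, choose $b\in Q$ with $q(b)=\pi(w)$, so $w-\tilde q(b)\in\ker\pi=\operatorname{im}\mu$ and $w\in\operatorname{im}p$. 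Thus we obtain $0\to W'\to P\xrightarrow{p}W\to 0$ with $W':=\ker p$ finite-dimensional and $P$ a finite direct sum of modules $eA$ with $e^2=e\in e_iAe_i$ and $i\in M\cup\Ob(\C'')\subseteq\Ob(\C')$. Finally $\operatorname{supp}(W')\subseteq\operatorname{supp}(P)\subseteq\Ob(\C')$; and for $i\in M$ one has $Q(i)=0$ (since $\C(i,j)=\emptyset$ for every $j\in\Ob(\C'')$, as $i$ is maximal), so $P(i)=W(i)$ and $p$ is an isomorphism at $i$, whence $W'(i)=0$. Therefore $\operatorname{supp}(W')\subseteq\Ob(\C')\setminus M=\Ob(\C'')$.

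The essential difficulty, and the one place where $\operatorname{char}\Bbbk=0$ enters, is the rigidity over maximal objects: it is what makes $W(i)$ projective over $e_iAe_i$, and hence $P_i$ a projective $\C$-module whose support already lies in $\Ob(\C')$; in positive characteristic this first stage fails. Everything else is bookkeeping with supports and the partial order, made routine by the hypotheses that $\Ob(\C')$ is finite and right-closed. (Alternatively, one could take $P\to W$ to be a projective cover and note that in characteristic zero the radical of the relevant finite-dimensional category algebra is spanned by the non-isomorphisms, so that $\operatorname{rad}P$, and a fortiori $\ker(P\to W)$, vanishes at every maximal object of $\Ob(\C')$.)
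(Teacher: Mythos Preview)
Your proof is correct. Part (1) matches the paper verbatim. For part (2), though, the paper takes a cleaner one-stage route: it sets
\[
P=\bigoplus_{i\in\Ob(\C')} We_i\otimes_{e_iAe_i}e_iA
\]
with the multiplication map $\rho\colon P\to W$. Surjectivity is immediate because at each $i\in\Ob(\C')$ the summand $We_i\otimes_{e_iAe_i}e_iAe_i\cong We_i$ already surjects onto $We_i$; and at a maximal $i$ all other summands $We_j\otimes_{e_jAe_j}e_jAe_i$ vanish (since $\C(i,j)=\emptyset$ for $j\neq i$), so $\rho$ is an isomorphism there and $W'=\ker\rho$ is supported on $\Ob(\C'')$. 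In other words, the paper simply does your ``first stage'' over \emph{every} object of $\C'$ simultaneously, which makes your second stage (covering $\overline W$ by a free module $Q$, lifting along $\pi$ via projectivity, and the element-chase for surjectivity of $(\mu,\tilde q)$) unnecessary. Both approaches use the characteristic-zero hypothesis in the same essential way---semisimplicity of $e_iAe_i$---so your two-stage construction does not buy extra generality; the paper's version just has less bookkeeping. Your parenthetical alternative via projective covers is also valid and is close in spirit to the paper's argument.
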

\begin{proof}
(1) Suppose $i\in \Ob(\C)$ and $i\leqslant j$ for some $j\in \Ob(\C'')$. Then $j\in \Ob(C')$, and $j$ is not a maximal object in $\Ob(\C')$. Hence $i\in \Ob(C')$, and $i$ is not a maximal object in $\Ob(\C')$.

(2) Let
\begin{equation*}
P = \bigoplus_{i\in \Ob(\C')} We_i \otimes_{e_i A e_i } e_i A.
\end{equation*}
The algebra $e_i A e_i$ is the group algebra of the finite group $\mathrm{Aut}_\C(i)$ for each $i\in I$. Since $\Bbbk$ has characteristic zero, the algebra $e_iAe_i$ is semisimple. It follows that $We_i$ is a finite direct sum of irreducible right $e_iAe_i$-modules, each of which is isomorphic to
$eAe_i$ for some primitive idempotent $e \in e_iAe_i$. One has a canonical isomorphism of right $\C$-modules: $eAe_i \otimes_{e_i A e_i} e_i A \cong eA$.
We see that $P$ is of the required form.

The multiplication map $\rho : P \to W$ is a homomorphism of right $A$-modules. Let $W'$ be the kernel of $\rho$. Since $P$ is finite-dimensional and has support contained in $\Ob(\C')$, the same is true for $W'$.

For each $i\in \Ob(\C')$, since $\rho$ maps $We_i \otimes_{e_i A e_i} e_i A e_i$ bijectively to $W e_i$, we see that $\rho$ is surjective. Moreover, if $i$ is maximal in $\Ob(\C')$, then $We_j \otimes_{e_j A e_j} e_j A e_i = 0$ if $j\in \Ob(\C')$ and $j\neq i$. Therefore if $i$ is maximal in $\Ob(\C')$, then $\rho$ maps $Pe_i$ bijectively to $We_i$. It follows that $W'$ has support contained in $\Ob(\C'')$.
\end{proof}

\begin{lemma} \label{injective resolution}
Suppose that the characteristic of $\Bbbk$ is zero. Let $U$ be a finite-dimensional left $\C$-module. Then there exists an exact sequence
\begin{equation*}
0 \to U \to I_0 \to I_1 \to \cdots \to I_n \to 0
\end{equation*}
of left $\C$-modules such that $I_0, I_1, \ldots, I_n$ are finite-dimensional injective left $\C$-modules.
\end{lemma}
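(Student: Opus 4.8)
The plan is to reduce the statement to a dual question about right $\C$-modules, where Lemma~\ref{projective resolution} directly applies. Given a finite-dimensional left $\C$-module $U$, put $W_0 := DU$, a finite-dimensional right $\C$-module with the same (finite) support as $U$. Because $\C$ is inwards finite, the union over the finitely many $j$ in the support of $W_0$ of the sets $\{\,i : i\leqslant j\,\}$ is finite; let $\C'_0$ be the corresponding right-closed subcategory, so $\Ob(\C'_0)$ is a finite set containing the support of $W_0$. The idea is to build a finite projective resolution of $W_0$ in the category of right $\C$-modules, by repeatedly invoking Lemma~\ref{projective resolution}, and then to apply the duality $D$ to get the sought injective resolution of $U$.

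First I would apply Lemma~\ref{projective resolution} to $W_0$ and $\C'_0$, obtaining a short exact sequence $0 \to W_1 \to P_0 \to W_0 \to 0$ in which $P_0$ is a finite direct sum of right $\C$-modules of the form $eA$ (with $e$ a primitive idempotent in some $e_iAe_i$), and $W_1$ is a finite-dimensional right $\C$-module supported in $\Ob(\C''_0)$, where $\C''_0$ is right-closed by part~(1) of that lemma. Setting $\C'_1 := \C''_0$ and iterating, I get short exact sequences $0 \to W_{k+1} \to P_k \to W_k \to 0$ with $P_k$ of the same type and $W_{k+1}$ supported in $\Ob(\C'_{k+1})$, $\C'_{k+1} := \C''_k$. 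Since every nonempty finite poset has a maximal element, $|\Ob(\C'_{k+1})| < |\Ob(\C'_k)|$ whenever $\C'_k$ is nonempty, so after finitely many steps, say $n$, we reach $\Ob(\C'_n) = \varnothing$ and hence $W_n = 0$. Splicing the differentials $P_{k+1}\twoheadrightarrow W_{k+1}\hookrightarrow P_k$ then yields an exact sequence of right $\C$-modules
\[
0 \to P_{n-1} \to P_{n-2} \to \cdots \to P_0 \to DU \to 0 .
\]

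Finally I would apply the standard duality functor $D$. Each $P_k$ is finite-dimensional (as $\C$ is inwards finite and hom-finite, each $e_iA$, hence each $eA$, is finite-dimensional), so $D$ is exact on this sequence and, using $DDU \cong U$, produces an exact sequence of left $\C$-modules
\[
0 \to U \to DP_0 \to DP_1 \to \cdots \to DP_{n-1} \to 0 .
\]
Each $DP_k$ is a finite direct sum of modules $D(eA)$ with $e$ a primitive idempotent, which are finite-dimensional indecomposable injective left $\C$-modules by Lemma~\ref{fd injectives}; a finite direct sum of these is again finite-dimensional and injective, so setting $I_k := DP_k$ (and relabeling the length as $n-1$) gives the desired resolution. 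The main obstacle is simply ensuring that the iteration of Lemma~\ref{projective resolution} terminates; this is where inwards finiteness is essential, since it confines the support of $DU$ to a finite right-closed subcategory, and each step then strictly shrinks a finite object set. The other ingredients — exactness of $D$ on finite-dimensional modules and the identification of $D(eA)$ as an injective — are immediate from the preliminaries, and notably the argument does not require $\C$ to be locally self-injective.
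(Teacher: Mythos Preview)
Your argument is correct and is essentially identical to the paper's own proof: dualize $U$ to $W_0=DU$, iterate Lemma~\ref{projective resolution} on a shrinking chain of finite right-closed subcategories to obtain a finite projective resolution of $W_0$ by right $\C$-modules, then apply $D$ and invoke Lemma~\ref{fd injectives}. The only cosmetic discrepancy is that you call the idempotents $e$ primitive while Lemma~\ref{projective resolution} only asserts $e^2=e$; this is harmless since each $e_iAe_i$ is semisimple and a finite direct sum of modules $D(eA)$ is injective in any case.
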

\begin{proof}
Let $W$ be the right $\C$-module $DU$. Let $\C_0$ be the full subcategory of $\C$ on the objects $i$ such that $i\leqslant j$ for some $j$ in the support of $W$. It is clear that $\C_0$ is right-closed.  Since the support of $W$ is a finite set and $\C$ is inwards finite, the set $\Ob(\C_0)$ is finite. Let $\C_1$ be the full subcategory of $\C_0$ on the objects which are not maximal in $\C_0$. Then $\C_1$ is also a right-closed subcategory of $\C$ with $\Ob(\C_1)$ finite. By Lemma \ref{projective resolution}, there is a short exact sequence
\begin{equation*}
0 \to W_1 \to P_0 \to W \to 0
\end{equation*}
of right $\C$-modules such that:
\begin{itemize}
\item $P_0$ is a finite direct sum of right $\C$-modules of the form $eA$ for some idempotent $e\in e_i A e_i$ with $i\in \Ob(C_0)$;

\item $W_1$ is finite dimensional and its support is contained in $\Ob(\C_1)$.
\end{itemize}
Let $\C_2$ be the full subcategory of $\C_1$ on the objects which are not maximal in $\C_1$. We now apply Lemma \ref{projective resolution} again to obtain a short exact sequence
\begin{equation*}
0 \to W_2 \to P_1 \to W_1 \to 0
\end{equation*}
of right $\C$-modules such that:
\begin{itemize}
\item $P_1$ is a finite direct sum of right $\C$-modules of the form $eA$ for some idempotent $e\in e_i A e_i$ with $i\in \Ob(C_1)$;

\item $W_2$ is finite dimensional and its support is contained in $\Ob(\C_2)$.
\end{itemize}
Recursively, we obtain the subcategories $\C_0, \C_1, \C_2, \ldots$ and a projective resolution
\begin{equation} \label{resolution}
\cdots \to P_n \to \cdots \to P_1 \to P_0 \to W \to 0
\end{equation}
where each $P_s$ is a finite direct sum of right $\C$-modules of the form $eA$ for some idempotent $e\in e_i A e_i$ with $i\in \Ob(\C_s)$. Since $|\Ob(\C_0)|>|\Ob(\C_1)|>|\Ob(\C_2)| > \cdots$, the projective resolution \eqref{resolution} is of finite length. By Lemma \ref{fd injectives}, we see that applying the functor $D$ to \eqref{resolution} gives the required exact sequence.
\end{proof}

\subsection{An equivalence of categories}
For the adjoint pair $(\nu, \nu^{-1})$ in Lemma \ref{adjoint}, we have the following result.

\begin{proposition} \label{section functor}
Suppose that the characteristic of $\Bbbk$ is zero and $\C$ is locally self-injective. Then the counit $\nu \circ \nu^{-1} \to \mathrm{id}$ is an isomorphism.
\end{proposition}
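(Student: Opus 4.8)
The plan is to first settle the case where $U$ is finite-dimensional and injective, where the statement is essentially Corollary \ref{equiv}, and then to propagate it to an arbitrary finite-dimensional $U$ by means of the finite injective resolution furnished by Lemma \ref{injective resolution} together with the exactness of $\nu$ guaranteed by Lemma \ref{exact}. Throughout, write $\epsilon\colon \nu\circ\nu^{-1}\to\mathrm{id}$ for the counit of the adjunction of Lemma \ref{adjoint}.

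For the base case, recall from Corollary \ref{equiv} that $\nu$ restricts to an equivalence $\C\mbox{-proj}\to\C\mbox{-fdinj}$ with quasi-inverse $\nu^{-1}$. Since $\nu$ carries $\C\mbox{-proj}$ into $\C\mbox{-fdinj}$ and $\nu^{-1}$ carries $\C\mbox{-fdinj}$ into $\C\mbox{-proj}$, the adjunction of Lemma \ref{adjoint} restricts to these full subcategories, so on $\C\mbox{-fdinj}$ the functor $\nu^{-1}$ is simultaneously the right adjoint of $\nu$ and a quasi-inverse equivalence; hence $\nu^{-1}$ is fully faithful on $\C\mbox{-fdinj}$, and therefore the counit $\epsilon_U$ is an isomorphism for every $U\in\C\mbox{-fdinj}$. (If one prefers not to identify the two pairs of natural isomorphisms abstractly, one can argue directly: by Lemma \ref{fd injectives} it suffices to take $U=D(eA)=\nu(Ae)$ with $e$ a primitive idempotent, and then the triangle identity $\epsilon_{\nu(Ae)}\circ\nu(\eta_{Ae})=\mathrm{id}$ reduces the claim to showing that the unit $\eta_{Ae}\colon Ae\to\nu^{-1}\nu(Ae)$ is an isomorphism, which is immediate from \eqref{idempotent}.)

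Now let $U$ be an arbitrary finite-dimensional left $\C$-module. By Lemma \ref{injective resolution} there is an exact sequence $0\to U\to I_0\to I_1\to\cdots\to I_n\to 0$ with each $I_j\in\C\mbox{-fdinj}$; in particular $0\to U\to I_0\to I_1$ is exact, i.e.\ $U=\ker(I_0\to I_1)$. Applying the left exact functor $\nu^{-1}$ gives $\nu^{-1}U=\ker(\nu^{-1}I_0\to\nu^{-1}I_1)$ in $\C\mbox{-mod}$, where $\nu^{-1}I_0,\nu^{-1}I_1\in\C\mbox{-proj}$ by Corollary \ref{equiv}. Since $\C$ is locally self-injective, $\nu$ is exact (Lemma \ref{exact}), so it preserves this kernel: $\nu\nu^{-1}U=\ker(\nu\nu^{-1}I_0\to\nu\nu^{-1}I_1)$, with $\nu\nu^{-1}U\to\nu\nu^{-1}I_0$ the inclusion. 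Thus, by naturality of $\epsilon$, we have a commutative diagram with exact rows
\[
\xymatrix{
0 \ar[r] & \nu\nu^{-1}U \ar[r] \ar[d]_{\epsilon_U} & \nu\nu^{-1}I_0 \ar[r] \ar[d]_{\epsilon_{I_0}} & \nu\nu^{-1}I_1 \ar[d]_{\epsilon_{I_1}} \\
0 \ar[r] & U \ar[r] & I_0 \ar[r] & I_1
}
\]
and, since $\epsilon_{I_0}$ and $\epsilon_{I_1}$ are isomorphisms by the base case, a short diagram chase (the five lemma for left exact rows) shows that $\epsilon_U$ is an isomorphism.

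The only point requiring genuine care is the base case: one must make sure that the equivalence of Corollary \ref{equiv} is implemented by the unit and counit of the adjunction of Lemma \ref{adjoint} — equivalently, that the restricted adjunction $\C\mbox{-proj}\rightleftarrows\C\mbox{-fdinj}$ is an adjoint equivalence — and not by some unrelated natural isomorphisms; the parenthetical computation above is the safe route around this. Everything else is formal, using only that $\nu$ is exact, that $\nu^{-1}$ is left exact, and that finite-dimensional modules admit finite injective resolutions in $\C\mbox{-fdmod}$ in characteristic zero.
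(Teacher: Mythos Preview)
Your proof is correct and follows essentially the same approach as the paper: establish the counit is an isomorphism on finite-dimensional injectives via Corollary~\ref{equiv}, then propagate to arbitrary finite-dimensional $U$ using the finite injective resolution of Lemma~\ref{injective resolution}, the left exactness of $\nu^{-1}$, and the exactness of $\nu$ (Lemma~\ref{exact}).

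The one difference is organizational. The paper inducts on the minimal length of the injective resolution: it sets $U'=\mathrm{coker}(U\to I_0)$, forms the diagram with columns $\epsilon_U,\epsilon_{I_0},\epsilon_{U'}$, and invokes the induction hypothesis for $\epsilon_{U'}$. You instead use the first two injective terms directly, forming the diagram with columns $\epsilon_U,\epsilon_{I_0},\epsilon_{I_1}$, so that both comparison maps are already covered by the base case and no induction is needed. This is a genuine (if minor) streamlining; the paper's induction is not actually required once one observes that $I_1$ is itself injective. Your caution about verifying that the equivalence of Corollary~\ref{equiv} is realized by the adjunction's own unit and counit is well placed, and your parenthetical argument via the triangle identity and \eqref{idempotent} handles it.
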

\begin{proof}
We need to prove that the homomorphism $\nu(\nu^{-1}(U)) \to U$ is an isomorphism for each object $U$ of $\C\mbox{-fdmod}$. By Lemma \ref{injective resolution}, there is a finite injective resolution of $U$ by finite-dimensional injective left $\C$-modules; we shall use induction on the minimal length $n$ among all such resolutions. For $n=0$, the proposition follows from Corollary \ref{equiv}.

Let $0 \to U \to I_0 \to I_1 \to \cdots \to I_n \to 0$ be a resolution of $U$ by finite-dimensional injective left $\C$-modules and $n$ be minimal length with this property. Let $U'$ be the cokernel of $U \to I_0$. One gets a short exact sequence
\begin{equation*}
0 \to U \to I_0 \to U' \to 0.
\end{equation*}

Since $\nu^{-1}$ is left exact and $\nu$ is exact by Lemma \ref{exact}, the following diagram commutes and has exact rows:
\begin{equation*}
\xymatrix{ 0 \ar[r] & \nu(\nu^{-1}(U)) \ar[r] \ar[d] & \nu(\nu^{-1}(I_0)) \ar[r] \ar[d] &\nu(\nu^{-1}(U')) \ar[d] \\
0 \ar[r] & U \ar[r] & I_0 \ar[r] & U'  . }
\end{equation*}
Observe that the middle vertical map is an isomorphism by Corollary \ref{equiv} and the right vertical map is an isomorphism by induction hypothesis. Therefore the left vertical map is also an isomorphism.
\end{proof}

The kernel $\Ker(\nu)$ of the functor $\nu: \C\mbox{-mod} \to \C\mbox{-fdmod}$ is the full subcategory of $\C\mbox{-mod}$ on the objects $V$ such that $\nu(V)=0$. When $\nu$ is an exact functor, $\Ker(\nu)$ is a Serre subcategory of $\C\mbox{-mod}$ (that is, closed under submodules, quotients and extensions) and its Serre quotient category is denoted by $\C\text{-mod}/\Ker(\nu)$.

To establish the main result of this paper, we recall the definition of section functors and a classical result of Gabriel.

\begin{definition}
We say an exact functor $F: \mathcal{A} \to \mathcal{B}$ between two abelian categories admits a \emph{section functor} if $F$ has a right adjoint $S: \mathcal{B} \to \mathcal{A}$ such that the counit $F \circ S \to \rm{id}_{\mathcal{B}}$ is an isomorphism.
\end{definition}

\begin{lemma}\cite[Proposition III.2.5]{gabriel} \label{gabriel}
Let $F: \mathcal{A} \to \mathcal{B}$ be an exact functor between abelian categories which admits a section functor. Then the kernel $\Ker(F)$ is a localizing subcategory of $\mathcal{A}$, and $F$ induces an equivalence $\bar{F}: \mathcal{A} / \Ker(F) \to \mathcal{B}$.
\end{lemma}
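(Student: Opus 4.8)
The plan is to carry out Gabriel's classical argument in the present notation. Write $q\colon\mathcal{A}\to\mathcal{A}/\Ker(F)$ for the quotient functor and $S\colon\mathcal{B}\to\mathcal{A}$ for the section functor, so that $(F,S)$ is an adjoint pair whose counit $\epsilon\colon FS\to\mathrm{id}_{\mathcal{B}}$ is invertible; let $\eta\colon\mathrm{id}_{\mathcal{A}}\to SF$ denote the unit. Since $F$ is exact, $\Ker(F)$ is a Serre subcategory, so by the universal property of the Serre quotient $F$ factors uniquely as $F=\bar{F}\circ q$ with $\bar{F}\colon\mathcal{A}/\Ker(F)\to\mathcal{B}$ exact. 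It then remains to prove that (i) $\bar{F}$ is an equivalence, and (ii) $q$ admits a right adjoint, so that $\Ker(F)$ is a localizing subcategory.

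The crux is the observation that the unit of the adjunction becomes invertible after passing to the quotient. For any $A\in\mathcal{A}$, the triangle identity $\epsilon_{F(A)}\circ F(\eta_A)=\mathrm{id}_{F(A)}$ together with the invertibility of $\epsilon_{F(A)}$ forces $F(\eta_A)$ to be an isomorphism; since $F$ is exact, $F(\ker\eta_A)=\ker F(\eta_A)=0$ and $F(\operatorname{coker}\eta_A)=\operatorname{coker}F(\eta_A)=0$, so both $\ker\eta_A$ and $\operatorname{coker}\eta_A$ lie in $\Ker(F)$. Consequently $q(\eta_A)\colon q(A)\to q(SF(A))$ is an isomorphism, natural in $A$.

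Granting this, (i) follows from the three standard checks. Essential surjectivity: $\bar{F}(q(S(B)))=FS(B)\cong B$ for every $B\in\mathcal{B}$. Faithfulness: a morphism of $\mathcal{A}/\Ker(F)$ is represented by some $\varphi\colon A_1\to A'/A_1'$ with $A/A_1,\,A_1'\in\Ker(F)$; since $F$ is exact, it carries the canonical maps $A_1\hookrightarrow A$ and $A'\twoheadrightarrow A'/A_1'$ to isomorphisms, so $\bar{F}$ of this morphism is identified with $F(\varphi)$, and if the former vanishes then $F(\varphi)=0$, whence $\operatorname{im}\varphi\in\Ker(F)$ and $\varphi$ already represents the zero morphism. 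Fullness: given $\psi\colon F(A)\to F(A')$, put $f:=q(\eta_{A'})^{-1}\circ q(S\psi)\circ q(\eta_A)$, which makes sense by the previous paragraph; a short computation using $F=\bar{F}q$, the triangle identities, and the naturality of $\epsilon$ yields $\bar{F}(f)=\psi$. Thus $\bar{F}$ is an equivalence. For (ii), the chain of natural isomorphisms $\Hom_{\mathcal{A}}(A,S\bar{F}(X))\cong\Hom_{\mathcal{B}}(F(A),\bar{F}(X))\cong\Hom_{\mathcal{A}/\Ker(F)}(q(A),X)$---the first from the adjunction $(F,S)$, the second from the full faithfulness of $\bar{F}$---exhibits $S\circ\bar{F}$ as a right adjoint of $q$.

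I expect the only genuine obstacle to be the faithfulness step, which requires the calculus-of-fractions description of the $\Hom$-sets of the Serre quotient and its compatibility with $\bar{F}$; everything else is formal manipulation of the adjunction. An alternative, more conceptual route avoids this: one shows that each $S(B)$ is $\Ker(F)$-closed---it has no nonzero subobject in $\Ker(F)$ (by the adjunction, since $F$ kills such a subobject) and no non-split extension by an object of $\Ker(F)$ (again using invertibility of $\epsilon$)---deduces that the restriction of $q$ to $\Ker(F)$-closed objects is fully faithful, and concludes that $q\circ S\colon\mathcal{B}\to\mathcal{A}/\Ker(F)$ is fully faithful and essentially surjective, hence a quasi-inverse of $\bar{F}$, with right adjoint of $q$ obtained as before.
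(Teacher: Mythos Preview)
The paper does not supply a proof of this lemma at all: it is quoted verbatim as a classical result, with only the citation \cite[Proposition III.2.5]{gabriel}. So there is nothing to compare your argument against in the paper itself.

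That said, your proposal is a correct reconstruction of Gabriel's argument. The key computation---that the triangle identity $\epsilon_{F(A)}\circ F(\eta_A)=\mathrm{id}_{F(A)}$ together with invertibility of $\epsilon$ forces $F(\eta_A)$ to be an isomorphism, whence $\ker\eta_A$ and $\operatorname{coker}\eta_A$ lie in $\Ker(F)$---is exactly the right observation, and your verification of essential surjectivity, faithfulness (via the subquotient description of morphisms in the Serre quotient), and fullness (via $f=q(\eta_{A'})^{-1}\circ q(S\psi)\circ q(\eta_A)$) is sound. The adjunction $\Hom_{\mathcal{A}}(A,S\bar{F}(X))\cong\Hom_{\mathcal{B}}(F(A),\bar{F}(X))\cong\Hom_{\mathcal{A}/\Ker(F)}(q(A),X)$ correctly exhibits $S\bar{F}$ as a right adjoint of $q$, establishing that $\Ker(F)$ is localizing.

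Your self-identified concern about the faithfulness step is not a genuine gap: once you know $F(\varphi)=0$ and $F$ is exact, $\operatorname{im}\varphi\in\Ker(F)$, and any morphism in the Serre quotient that factors through an object of the Serre subcategory is zero by construction. The alternative ``$\Ker(F)$-closed'' route you sketch is also valid, though the claim that $S(B)$ admits no non-split extensions by objects of $\Ker(F)$ requires a short argument you have not written out (it follows from $\mathrm{Ext}^1_{\mathcal{A}}(K,S(B))=0$ for $K\in\Ker(F)$, which one can see by applying $F$ to such an extension and using exactness plus the adjunction).
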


Now we can prove the following main theorem.

\begin{theorem} \label{main theorem}
Suppose that the characteristic of $\Bbbk$ is zero, and suppose that $\C$ is locally self-injective. Then the functor $\nu$ induces an equivalence of categories
\begin{equation*}
\overline{\nu} : \C\mbox{-\rm{mod}}/\Ker(\nu)  \xrightarrow{\quad\sim\quad} \C\mbox{-\rm{fdmod}}.
\end{equation*}
\end{theorem}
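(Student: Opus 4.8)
The plan is to recognize the theorem as a direct application of Gabriel's criterion, Lemma \ref{gabriel}, once its two hypotheses have been verified for the functor $\nu : \C\mbox{-mod} \to \C\mbox{-fdmod}$. Thus I must check that $\nu$ is an exact functor between abelian categories, and that it admits a section functor in the sense of the definition preceding Lemma \ref{gabriel}.

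First I would dispatch exactness. The categories $\C\mbox{-mod}$ and $\C\mbox{-fdmod}$ are abelian, and under the standing assumption that $\C$ is locally self-injective, Lemma \ref{exact} gives that $\nu$ is exact. In particular $\Ker(\nu)$ is a Serre subcategory of $\C\mbox{-mod}$, so the Serre quotient $\C\mbox{-mod}/\Ker(\nu)$ is defined and the statement makes sense. Next I would produce the section functor: by Lemma \ref{adjoint} the functor $\nu^{-1}$ is a right adjoint of $\nu$, and by Proposition \ref{section functor} the counit $\nu\circ\nu^{-1}\to\mathrm{id}_{\C\mbox{-fdmod}}$ is an isomorphism. (This is the one place where the characteristic-zero hypothesis is genuinely used, via the finite injective resolutions furnished by Lemma \ref{injective resolution}, together with local self-injectivity.) Hence $\nu^{-1}$ is a section functor for $\nu$.

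With both hypotheses in place, Lemma \ref{gabriel} applies directly: $\Ker(\nu)$ is a localizing subcategory of $\C\mbox{-mod}$, and $\nu$ factors through the localization functor $\mathrm{loc}\colon \C\mbox{-mod}\to \C\mbox{-mod}/\Ker(\nu)$ to induce an equivalence $\overline{\nu}\colon \C\mbox{-mod}/\Ker(\nu)\xrightarrow{\ \sim\ }\C\mbox{-fdmod}$, which is precisely the assertion of the theorem; a quasi-inverse is obtained from $\nu^{-1}$ as in the commutative diagram of the introduction.

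Since every substantive ingredient has already been established in the preliminary sections, I do not anticipate a real obstacle at this stage. The only points deserving care are bookkeeping ones: confirming that the ambient categories are abelian so that the notions of Serre subcategory, localizing subcategory, and Gabriel's proposition all apply, and checking that our definition of "section functor" matches the hypothesis of Lemma \ref{gabriel} — both are immediate.
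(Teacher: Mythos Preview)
Your proposal is correct and follows essentially the same approach as the paper: verify exactness of $\nu$ via Lemma \ref{exact}, verify that $\nu^{-1}$ is a section functor via Lemma \ref{adjoint} and Proposition \ref{section functor}, and then invoke Gabriel's result (Lemma \ref{gabriel}). The paper's own proof is simply a terser version of exactly this argument.
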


\begin{proof}
Lemma \ref{exact} and Proposition \ref{section functor} tell us that the Nakayama functor $\nu$ is exact and admits a section functor $\nu^{-1}$. The conclusion now follows from Lemma \ref{gabriel}.
\end{proof}

\subsection{Observations on \texorpdfstring{$\Ker(\nu)$}{}}

In this subsection we compare the subcategory $\C\mbox{-\rm{fdmod}}$ with the subcategory $\Ker(\nu)$, and prove that they coincide under certain conditions. In this case, Theorem \ref{main theorem} says that the Serre quotient category $\C\mbox{-\rm{mod}} / \C\mbox{-\rm{fdmod}}$ is equivalent to $\C\mbox{-\rm{fdmod}}$. Therefore the Serre quotient category has enough injective objects, and every finitely generated object in it has finite length and finite injective dimension. Moreover, $\C\mbox{-\rm{mod}}$ can be regarded as an extension of $\C\mbox{-\rm{fdmod}}$ by itself.

Firstly, we give a sufficient condition such that $\C\mbox{-\rm{fdmod}} \subseteq \Ker(\nu)$, which should be easy to check in practice.

\begin{definition}
We say that a left $\C$-module $V$ is \emph{torsion-free} if, for every morphism $f$ in $\C$, say $f\in \C(i,j)$, the induced map $f_* : V(i) \to V(j)$ is injective.
\end{definition}

\begin{lemma} \label{monomorphism}
The following statements are equivalent:

(1) Every morphism in $\C$ is a monomorphism, that is, if one has $f\in \C(j,k)$ and $g,h\in \C(i,j)$ such that $fg=fh$, then $g=h$.

(2) Every finitely generated projective left $\C$-module is torsion-free.
\end{lemma}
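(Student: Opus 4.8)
The plan is to reduce the whole equivalence to the representable projective modules $Ae_i$ ($i\in I$) and to make explicit how a morphism acts on such a module. Recall that $Ae_i$, viewed as a left $\C$-module, has $(Ae_i)(j)=e_jAe_i=\Bbbk\C(i,j)$ for $j\in I$, and that a morphism $f\in\C(j,k)$ acts by the $\Bbbk$-linear map $f_*\colon \Bbbk\C(i,j)\to\Bbbk\C(i,k)$ sending a basis element $g\in\C(i,j)$ to $fg\in\C(i,k)$. The key elementary observation I would record first is that, since $f_*$ carries the basis $\C(i,j)$ into the basis $\C(i,k)$, the map $f_*$ is injective as a linear map if and only if $g\mapsto fg$ is injective as a map of sets $\C(i,j)\to\C(i,k)$. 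Consequently $Ae_i$ is torsion-free precisely when post-composition with every morphism $f\in\C(j,k)$ is injective on every hom-set $\C(i,j)$, and letting $i$ vary this says exactly that every morphism of $\C$ is a monomorphism.

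With that dictionary in hand, I would argue (1)$\Rightarrow$(2) as follows. A finitely generated projective left $\C$-module $P$ is a direct summand of a finite direct sum $\bigoplus_{l=1}^{s}Ae_{j_l}$ (by Lemma \ref{projectives}, or simply because $P$ is a summand of a finitely generated free module). The class of torsion-free modules is closed under direct sums and under submodules — for $V\subseteq W$ the structure map $f_*$ on $V$ is the restriction of that on $W$, and for a direct sum it acts coordinatewise — hence also under direct summands. So it suffices to know that each $Ae_i$ is torsion-free, and that is immediate from (1) via the observation above.

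For (2)$\Rightarrow$(1), suppose $f\in\C(j,k)$ and $g,h\in\C(i,j)$ satisfy $fg=fh$. The module $Ae_i$ is finitely generated projective, hence torsion-free by (2), so $f_*\colon \Bbbk\C(i,j)\to\Bbbk\C(i,k)$ is injective; since $f_*(g)=fg=fh=f_*(h)$ we get $g=h$ in $\Bbbk\C(i,j)$, and as $g,h$ are basis elements this forces $g=h$ as morphisms. Hence every morphism of $\C$ is a monomorphism.

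I do not expect a genuine obstacle: all the content sits in the first paragraph, and the only subtlety is the ``basis into basis'' remark, which is what shows that the linear-algebra statement (injectivity of $f_*$) is no stronger than the combinatorial one (injectivity of $g\mapsto fg$). The one point to be careful about is not to assume finitely generated projectives are free — they are only summands of free modules — which is why one passes through closure of torsion-freeness under summands (equivalently, invokes Lemma \ref{projectives}).
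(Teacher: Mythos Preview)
Your proof is correct and follows essentially the same approach as the paper: both reduce to the representable projectives $Ae_i$ and use that $f_*$ sends the basis $\C(i,j)$ into the basis $\C(i,k)$. You are slightly more explicit than the paper in justifying why it suffices to check the $Ae_i$ (via closure of torsion-freeness under summands), whereas the paper simply asserts ``It suffices to prove that $Ae_i$ is torsion-free''; but this is a matter of detail, not of strategy.
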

\begin{proof}
(1)$\Rightarrow$(2) It suffices to prove that $Ae_i$ is torsion-free for each $i\in I$. Suppose $f\in \C(j,k)$. Since $f$ is a monomorphism, the map $f_*: e_j A e_i \to e_k A e_i$ sends the basis $\C(i,j)$ of $e_jAe_i$ bijectively onto a subset of the basis $\C(i,k)$ of $e_kAe_i$.

(2)$\Rightarrow$(1) Suppose one has $f\in \C(j,k)$ and $g,h\in \C(i,j)$ such that $fg=fh$. Since $Ae_i$ is torsion-free, the map $f_*: e_j A e_i \to e_k A e_i$ is injective. But $f_* (g-h)=0$, so $g=h$.
\end{proof}

\begin{corollary} \label{kernel}
Suppose that the partially ordered set $I$ has no maximal element. If every morphism in $\C$ is a monomorphism, then
$\C\mbox{-\rm{fdmod}} \subseteq \Ker(\nu).$
\end{corollary}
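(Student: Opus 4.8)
The plan is to show directly that $\nu(V)=0$ for every finite-dimensional left $\C$-module $V$. Since $\nu(V)=D\bigl(\Hom_\C(V,A)\bigr)$ and, by Lemma \ref{fd}, $\Hom_\C(V,A)$ is finite-dimensional, applying $D$ to it gives something of the same dimension; hence it suffices to prove $\Hom_\C(V,A)=0$. By \eqref{hom to direct sum} (valid as $V$ is finitely generated) this reduces to showing $\Hom_\C(V,Ae_i)=0$ for every $i\in I$.

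So fix $i\in I$ and a homomorphism $\phi\colon V\to Ae_i$; I would argue $\phi=0$. Consider the image $M:=\phi(V)$, a $\C$-submodule of $Ae_i$. On the one hand $M$ is a quotient of $V$, hence finite-dimensional, so its support is a finite subset of $I$. On the other hand, since every morphism in $\C$ is a monomorphism, Lemma \ref{monomorphism} shows $Ae_i$ is torsion-free; as a submodule of a torsion-free module is again torsion-free (each structure map of $M$ is a restriction of an injective map), $M$ is torsion-free. Now suppose $\phi\ne 0$, so $M\ne 0$ and there is some $j_0\in I$ with $M(j_0)\ne 0$. Because $I$ has no maximal element, one builds a chain $j_0<j_1<j_2<\cdots$ in $I$, and since $\leqslant$ is a partial order the $j_n$ are pairwise distinct. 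Choosing $f_n\in\C(j_n,j_{n+1})$ for each $n$, torsion-freeness of $M$ makes $(f_n)_*\colon M(j_n)\to M(j_{n+1})$ injective, so by induction $M(j_n)\ne 0$ for all $n$. Thus the support of $M$ contains the infinite set $\{j_0,j_1,j_2,\dots\}$, contradicting finite-dimensionality of $M$. Hence $\phi=0$, and therefore $\nu(V)=0$, i.e. $V\in\Ker(\nu)$.

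I do not expect a genuine obstacle here: the statement falls out immediately once Lemma \ref{monomorphism} supplies torsion-freeness of the projectives $Ae_i$. The only points needing a line of care are the passage from torsion-freeness of $Ae_i$ to torsion-freeness of the submodule $M$, and the observation that "$I$ has no maximal element" together with antisymmetry of $\leqslant$ genuinely produces infinitely many distinct objects in the support of $M$; both are routine. Note also that local self-injectivity of $\C$ is not needed for this corollary.
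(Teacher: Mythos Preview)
Your proof is correct and follows essentially the same route as the paper: reduce to $\Hom_\C(V,Ae_i)=0$, invoke Lemma~\ref{monomorphism} to get that $Ae_i$ is torsion-free, and conclude that the (finite-dimensional) image of any $\phi\colon V\to Ae_i$ must vanish. The paper states the key step more tersely---``$Ae_i$ is torsion-free, so it does not contain any nonzero finite-dimensional $\C$-submodules''---whereas you spell out the chain argument using the absence of maximal elements; this is just a matter of level of detail.
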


\begin{proof}
Let $V$ be a finite-dimensional left $\C$-module. We need to show that $\Hom_\C (V, Ae_i)=0$ for every $i\in I$. By Lemma \ref{monomorphism}, the $\C$-module $Ae_i$ is torsion-free, so it does not contain any nonzero finite-dimensional $\C$-submodules. Hence, the image of any homomorphism from $V$ to $Ae_i$ is zero.
\end{proof}

The following proposition gives two equivalent characterizations such that the reverse inclusion $\Ker(\nu) \subseteq \C\mbox{-\rm{fdmod}}$ holds. Surprisingly, the answer to this question is closely related to classification of injective modules in $\C\mbox{-\rm{mod}}$.

\begin{proposition} \label{inverse inclusion}
Suppose that the characteristic of $\Bbbk$ is zero and $\C$ is locally self-injective. Then the following statements are equivalent:
\begin{enumerate}
\item $\Ker(\nu) \subseteq \C\mbox{-\rm{fdmod}}$.
\item If $V$ is a finitely generated, infinite-dimensional left $\C$-module, then there exists $i \in I$ such that $\Hom_{\C} (V, Ae_i) \neq 0$.
\item The category $\C\mbox{-\rm{mod}}$ has enough injectives, and every finitely generated injective left $\C$-module is isomorphic to a direct sum of a finite-dimensional injective left $\C$-module and a finitely generated projective left $\C$-module.
\end{enumerate}
\end{proposition}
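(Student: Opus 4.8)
The plan is to prove $(1)\Leftrightarrow(2)$ directly, and then close the cycle through $(1)\Rightarrow(3)\Rightarrow(2)$. The tool driving the last two implications is the unit $\eta_V\colon V\to\nu^{-1}\nu(V)$ of the adjunction of Lemma \ref{adjoint}: by the triangle identity the composite $\nu(V)\xrightarrow{\nu(\eta_V)}\nu\nu^{-1}\nu(V)\to\nu(V)$ is the identity, so $\nu(\eta_V)$ is a split monomorphism, and exactness of $\nu$ (Lemma \ref{exact}) then forces $\nu(\ker\eta_V)=0$, i.e.\ $K_V:=\ker\eta_V\in\Ker(\nu)$.

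\emph{$(1)\Leftrightarrow(2)$.} For any $V\in\C\mbox{-mod}$, finite generation and \eqref{hom to direct sum} give $\Hom_\C(V,A)\cong\bigoplus_{i\in I}\Hom_\C(V,Ae_i)$, which is a finite-dimensional right $\C$-module by Lemma \ref{fd}, so $D$ is faithful on it; hence $\nu(V)=0$ precisely when $\Hom_\C(V,Ae_i)=0$ for all $i$. Thus membership in $\Ker(\nu)$ is the vanishing of all these Hom-spaces, and since every object of $\C\mbox{-mod}$ is finitely generated, $(1)$ — that such a module is finite-dimensional — is the contrapositive of $(2)$.

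\emph{$(1)\Rightarrow(3)$.} Assume $(1)$. First, enough injectives: given $V$, the module $K_V$ is finite-dimensional, and $V/K_V$ embeds in $\nu^{-1}\nu(V)$. By Lemma \ref{injective resolution} one embeds the finite-dimensional module $\nu(V)$ into a finite-dimensional injective $J$; applying the left-exact $\nu^{-1}$ and Corollary \ref{equiv} yields $\nu^{-1}\nu(V)\hookrightarrow\nu^{-1}(J)$ with $\nu^{-1}(J)$ finitely generated projective, hence injective as $\C$ is locally self-injective. Also embed $K_V$ into a finite-dimensional injective $E_{\mathrm{fd}}$ (Lemma \ref{injective resolution}) and extend $K_V\hookrightarrow E_{\mathrm{fd}}$ over $V$; here the indecomposable finite-dimensional injectives $D(e_iA)$ of Lemma \ref{fd injectives} are injective in $\C\mbox{-mod}$ because $\Hom_\C(V,D(e_iA))\cong D(V(i))$ naturally in $V$, hence exact. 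Combining with $V\twoheadrightarrow V/K_V\hookrightarrow\nu^{-1}(J)$ gives $V\hookrightarrow E_{\mathrm{fd}}\oplus\nu^{-1}(J)$, an injective of the required shape. Next, the structure statement: let $E$ be finitely generated injective and $K=\ker\eta_E$, finite-dimensional by $(1)$; let $J_0$ be its injective envelope in the locally noetherian Grothendieck category $\C\mbox{-Mod}$. Since any finite-dimensional injective containing $K$ contains a copy of $J_0$, Lemma \ref{injective resolution} shows $J_0$ is finite-dimensional, and being injective it splits off $E$, say $E=J_0\oplus E'$. Additivity of $\eta$ gives $\ker\eta_{E'}=\ker\eta_E\cap E'=K\cap E'=0$, so as above $E'\hookrightarrow\nu^{-1}\nu(E')\hookrightarrow\nu^{-1}(J')$ with $\nu^{-1}(J')$ finitely generated projective; since $E'$ is injective the mono splits and $E'$ is finitely generated projective. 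Thus $E=J_0\oplus E'$ has the asserted form.

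\emph{$(3)\Rightarrow(2)$, and the main obstacle.} Take $V$ finitely generated and infinite-dimensional; by $(3)$ embed $V\hookrightarrow E_{\mathrm{fd}}\oplus P$ with $E_{\mathrm{fd}}$ finite-dimensional injective and $P$ finitely generated projective. The kernel of $V\to E_{\mathrm{fd}}\oplus P\to P$ equals $V\cap E_{\mathrm{fd}}$, which is finite-dimensional, so the image of $V$ in $P$ is infinite-dimensional; writing $P$ as a direct summand of some $\bigoplus_{k}Ae_{i_k}$ then forces $\Hom_\C(V,Ae_{i_k})\neq 0$ for some $k$, which is $(2)$. The routine parts are $(1)\Leftrightarrow(2)$ and $(3)\Rightarrow(2)$; the point needing care is the structure statement in $(1)\Rightarrow(3)$, which relies on existence of injective envelopes in $\C\mbox{-Mod}$, on the fact that such an envelope of a finite-dimensional module stays finite-dimensional (cheap here via Lemma \ref{injective resolution}), on reconciling injectivity in $\C\mbox{-mod}$ with injectivity in $\C\mbox{-Mod}$ (which uses that $\C$ is locally noetherian), and on checking that $\eta$ is compatible with the direct sum decompositions invoked.
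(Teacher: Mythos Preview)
Your proof is correct, and the equivalence $(1)\Leftrightarrow(2)$ and the implication $(3)\Rightarrow(2)$ match the paper's. The route through $(1)\Rightarrow(3)$, however, is genuinely different. The paper proves $(2)\Rightarrow(3)$ by first establishing an auxiliary statement $(\boldsymbol\star)$ --- every finitely generated module with no nonzero finite-dimensional submodule embeds in a finitely generated projective --- by induction on $\dim\nu(F)$: one uses $(2)$ to produce a nonzero map $F\to Ae_i$, takes the kernel $U$, observes $\dim\nu(U)<\dim\nu(F)$, and recurses. The enough-injectives and structure statements then follow from $(\boldsymbol\star)$ applied to $V/E$, where $E$ is the maximal finite-dimensional submodule of $V$ (or of an injective $V$, together with a maximal essential extension argument inside $V$).

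Your argument bypasses both $(\boldsymbol\star)$ and the induction entirely, replacing them with the observation that the adjunction unit $\eta_V$ always has $\ker\eta_V\in\Ker(\nu)$ (from the triangle identity and exactness of $\nu$), so under $(1)$ this kernel is finite-dimensional and $V/\ker\eta_V$ embeds directly into the projective $\nu^{-1}(J)$. This is cleaner and more conceptual; the price is that the structure statement then leans on injective envelopes in the ambient Grothendieck category $\C\mbox{-Mod}$ and on the locally-noetherian passage between injectivity in $\C\mbox{-mod}$ and in $\C\mbox{-Mod}$, which you correctly flag. The paper's argument stays entirely inside $\C\mbox{-mod}$ and uses only the maximal essential extension inside the given injective, so it is slightly more self-contained, but your use of $\eta$ arguably explains \emph{why} the decomposition exists rather than constructing it step by step.
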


\begin{proof}
The equivalence between (1) and (2) is clear. Note that if $\C$ has only finitely many objects, then $\C\mbox{-\rm{fdmod}} = \C\mbox{-\rm{mod}}$, and there does not exist finitely generated, infinite-dimensional left $\C$-modules. Therefore, in this case both (1) and (2) hold trivially.

$(3) \Rightarrow (2)$ Let $V$ be a finitely generated, infinite-dimensional left $\C$-module. By the assumption, there exists an injection $V \to P \oplus T$, where $P$ is a finitely generated projective $\C$-module, and $T$ is a finite-dimensional injective $\C$-module. Note that $P$ is nonzero since $V$ is infinite-dimensional. Furthermore, the composition map $V \to P \oplus T \to P$ cannot be 0 where the second one is the projection because of the same reason. This observation implies (2).

$(2) \Rightarrow (3)$ Suppose that (2) (and hence (1)) holds. Let us first prove the following statement:

\begin{itemize}
\item[$(\boldsymbol{\star})$] If $F$ is a finitely generated left $\C$-module which has no nonzero finite-dimensional $\C$-submodules, then there is an injective homomorphism $F \to P$, where $P$ is a finitely generated projective left $\C$-module.
\end{itemize}
As in \cite[Proposition 7.5]{gl2}, we use induction on the dimension of $\nu(F)$. If $\nu(F)=0$, then, by (1), $F$ is finite-dimensional and so $F=0$. Now, suppose $\nu(F)\ne 0$. By assumption, $F$ is infinite-dimensional. It then follows from (2) that there exists $i \in I$ and a nonzero homomorphism $f: F \to Ae_i$. Let $W$ be the image of $F$ under $f$. Then we get a short exact sequence of finitely generated left $\C$-modules
\begin{equation*}
0 \to U \to F \to W \to 0,
\end{equation*}
which induces another short exact sequence of finite-dimensional left $\C$-modules:
\begin{equation*}
0 \to \nu(U) \to \nu(F) \to \nu(W) \to 0.
\end{equation*}
Note that $\Hom_\C(W,Ae_i)\ne 0$ since there is an inclusion from $W$ into $Ae_i$. In particular, $\nu(W)\neq 0$. Thus the dimension of $\nu(U)$ is strictly less than that of $\nu(F)$. By induction hypothesis, $(\boldsymbol{\star})$ holds for $U$. Hence $(\boldsymbol{\star})$ holds for $F$.

Now let $V$ be any finitely generated left $\C$-module. Let $E$ be the maximal finite-dimensional $\C$-submodule of $V$. By Lemma \ref{injective resolution}, there is an injection $E\to T$ where $T$ is a finite-dimensional injective left $\C$-module. Let $F=V/E$. Then $F$ is a finitely generated left $\C$-module which has no nonzero finite-dimensional $\C$-submodules, so by $(\boldsymbol{\star})$, there is an injection $F\to P$ where $P$ is a finitely generated projective left $\C$-module. It follows that there is an injection $V \to T\oplus P$. Therefore the category $\C\mbox{-\rm{mod}}$ has enough injectives.

As in the proof of \cite[Lemma 2.4]{nagpal}, we suppose that the module $V$ is injective and $E'$ is a maximal essential extension of $E$ in $V$. Then $E'$ is injective by \cite[Proposition X.5.4]{grillet}. Moreover, $E'$ must be finite-dimensional, for otherwise it cannot be an essential extension of $E$. Hence $E=E'$, and so $E$ is injective. Therefore, $V$ is isomorphic to $E\oplus F$. Hence $F$ is also injective. By $(\boldsymbol{\star})$, it follows that $F$ is projective.
\end{proof}

\section{On an open question of Nagpal \label{question}}

We now discuss the application of Theorem \ref{main theorem} to the categories $\FI_G$ and $\VI_q$ studied in representation stability theory. In particular, we affirmatively answer the following open question of Nagpal \cite[Question 1.11]{nagpal}:

{\bf Question:} Can one establish an equivalence %Is there any equivalence
\begin{equation*}
\VI_q \mbox{-mod} / \VI_q \mbox{-fdmod} \xrightarrow{\quad\sim\quad} \VI_q \mbox{-fdmod} \; ?
\end{equation*}

Let $G$ be a finite group. The category $\FI_G$ was defined independently in \cite[Example 3.8]{gl1} and \cite{ss4}; let us first recall its definition. The objects of $\FI_G$ are the finite sets. The morphisms in $\FI_G$ from a finite set $X$ to a finite set $Y$ are the pairs $(f,c)$ where $f:X\to Y$ is an injection and $c: X\to G$ is an arbitrary map. If $(f,c)$ is a morphism from $X$ to $Y$, and $(f', c')$ is a morphism from $Y$ to $Z$, then their composite is defined to be the morphism $(f'', c'')$, where
\begin{equation*}
f''(x) = f'(f(x)), \qquad c''(x) = c'(f(x)) c(x), \qquad \mbox{ for each } x\in X.
\end{equation*}
When $G$ is the trivial group, the category $\FI_G$ is the category $\FI$ of finite sets and injections.

Now, we recall the definition of the category $\VI_q$. Let $\F_q$ be a finite field with $q$ elements. The objects of $\VI_q$ are finite-dimensional vector spaces over $\F_q$, and morphisms are the injective linear maps. The composite of morphisms in $\VI_q$ is just the usual composition of maps.
The category $\VI_q$ has been studied by many authors; see for example \cite{nagpal} and the references therein.

In the following lemma we collect some important results on $\FI_G$ and $\VI_q$, which will be used to establish an equivalence between the Serre quotient category and the category of finite-dimensional modules.

\begin{lemma} \label{basic results}
Suppose that $\Bbbk$ is of characteristic zero and $\C$ is a skeleton of $\FI_G$ or $\VI_q$. Then
\begin{enumerate}
\item $\C$ is locally noetherian over $\Bbbk$.
\item Every finitely generated projective left $\C$-module is injective.
\item Every morphism in $\C$ is a monomorphism.
\item If $V$ is a finitely generated, infinite-dimensional left $\C$-module, then there exists some $i \in I$ such that $\Hom_A (V, Ae_i) \neq 0$.
\end{enumerate}
\end{lemma}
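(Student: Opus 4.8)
The statement collects four standard facts about $\FI_G$- and $\VI_q$-modules, and the plan is to treat them one at a time, citing the literature where a substantial argument is needed and supplying the short proofs where it is not. Item (3) is purely formal: a morphism of $\FI_G$ is a pair $(f,c)$ with $f$ an injection of finite sets, and a morphism of $\VI_q$ is an injective $\F_q$-linear map, so in either case a morphism $\beta$ is left-cancellable --- if $\beta\alpha=\beta\alpha'$ then the underlying set-maps (resp.\ linear maps) of $\alpha$ and $\alpha'$ coincide, and in the $\FI_G$ case the composition rule $c''(x)=c'(f(x))c(x)$ then forces the $G$-valued components to coincide as well --- so every morphism is a monomorphism in the sense of Lemma \ref{monomorphism}(1). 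Item (1) is the local noetherianity theorem: for a skeleton of $\FI_G$ with $G$ finite this is proved in \cite{gl1} and \cite{ss4} (over any noetherian ground ring, in particular over $\Bbbk$), and for a skeleton of $\VI_q$ it is a theorem of Putman and Sam, re-proved in \cite{nagpal}. Item (2), that every finitely generated projective left $\C$-module is injective, is proved for $\FI_G$ in \cite{gl1} (extending the case of $\FI$) and for $\VI_q$ in \cite{nagpal}.

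The remaining item (4) is the substantive one, and I expect it to be the main obstacle --- though it, too, can be deduced from the structure theory already in the literature. First I would reduce to the torsion-free case: the torsion submodule $E\subseteq V$ (the sum of all finite-dimensional submodules) is finite-dimensional for a finitely generated module over a skeleton of $\FI_G$ or $\VI_q$, and the quotient $F:=V/E$ is nonzero (as $V$ is infinite-dimensional) and torsion-free in the sense of the paper; here one uses that in $\FI_G$ and $\VI_q$ every morphism $i\to k$ with $i\leqslant j\leqslant k$ factors through $j$, so a module with no nonzero finite-dimensional submodule automatically has all its structure maps $f_*$ injective. Since a nonzero homomorphism $F\to Ae_i$ pulls back along $V\twoheadrightarrow F$ to a nonzero homomorphism $V\to Ae_i$, it suffices to produce such a map out of $F$. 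Now torsion-freeness makes the canonical map from $F$ to its shift (by a one-element set, resp.\ a one-dimensional space) injective, so $F$ embeds into the $a$-fold shift $\Sigma^a F$ for every $a\geqslant 0$; and by the shift (central stability) theorems --- Nagpal's theorem for $\VI_q$ \cite{nagpal} and the corresponding statement for $\FI_G$ (see \cite{gl2}) --- the module $\Sigma^a F$ is semi-induced for $a$ large, i.e.\ it has a finite filtration whose subquotients are induced modules, which in characteristic zero are finitely generated projective (using that each $e_iAe_i$ is semisimple). Splitting off the top quotient at each stage, $\Sigma^a F$ is itself finitely generated projective, hence a direct summand of some $\bigoplus_k Ae_{j_k}$ by Lemma \ref{projectives}; composing $F\hookrightarrow\Sigma^a F\hookrightarrow\bigoplus_k Ae_{j_k}$ with a coordinate projection gives a nonzero homomorphism $F\to Ae_{j_k}$, as required.

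Thus essentially all the difficulty sits in item (4), and within it in the appeal to the shift/central-stability theorems and to the finiteness of the torsion submodule; the rest of the argument for that item is the routine bookkeeping above, and the only care needed is to match the precise notion of ``semi-induced'' used in the cited sources with the statement ``induced modules are finitely generated projective'' required here. Once (1)--(4) are established, Corollary \ref{kernel} gives $\C\text{-fdmod}\subseteq\Ker(\nu)$, Proposition \ref{inverse inclusion} (whose condition (2) is exactly item (4)) gives the reverse inclusion, and Theorem \ref{main theorem} then yields the desired equivalence $\C\text{-mod}/\C\text{-fdmod}\xrightarrow{\sim}\C\text{-fdmod}$, answering Nagpal's question.
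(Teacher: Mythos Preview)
Your proposal is correct and follows the same approach as the paper, which likewise handles (1)--(4) by citation (\cite{gl1} for (1), \cite{gl2} for (2) and the $\FI_G$ case of (4), \cite{nagpal} for the $\VI_q$ case of (4)) and verifies (3) directly from the definitions. One small slip: for item (2) the relevant reference is \cite{gl2}, not \cite{gl1}; your detailed sketch for (4) goes beyond the paper's bare citation and accurately unpacks the argument behind \cite[Lemmas 7.2--7.3]{gl2} and \cite[Theorem 4.34]{nagpal}.
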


\begin{proof}
(1) is established in \cite[Theorem 3.7, Example 3.8, Example 3.10]{gl1}. (2) is verified in \cite[Theorem 1.5]{gl2}.
(3) follows from the definitions of $\FI_G$ and $\VI_q$. (4) follows from both \cite[Lemma 7.2, Lemma 7.3]{gl2} for $\FI_G$ and from \cite[Theorem 4.34]{nagpal} for $\VI_q$.
\end{proof}

Now, we are ready to prove the following result for $\FI_G$ and $\VI_q$, which answers positively the above-mentioned question.

\begin{theorem} \label{fig and vi}
Suppose that the characteristic of $\Bbbk$ is zero and $\C$ is a skeleton of $\FI_G$ or $\VI_q$. Then the Nakayama functor $\nu$ induces an equivalence of categories
\begin{equation*}
 \C\mbox{-\rm{mod}}/\C\mbox{-\rm{fdmod}} \xrightarrow{\quad\sim\quad} \C\mbox{-\rm{fdmod}}.
\end{equation*}
\end{theorem}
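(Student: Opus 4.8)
The plan is to assemble Theorem \ref{main theorem}, Corollary \ref{kernel} and Proposition \ref{inverse inclusion}, feeding them the structural input recorded in Lemma \ref{basic results}. First I would check that a skeleton $\C$ of $\FI_G$ or $\VI_q$ satisfies the standing hypotheses of Section \ref{preliminaries}: it is inwards finite and hom-finite because, for a finite set (resp.\ finite-dimensional $\F_q$-space) $j$, there are only finitely many isomorphism classes of objects $i$ admitting a morphism to $j$ (those of smaller cardinality, resp.\ dimension) and each morphism set is finite (injections with a $G$-coloring, resp.\ $\F_q$-linear injections, of which there are finitely many); and it is locally noetherian by Lemma \ref{basic results}(1). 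Moreover, by Lemma \ref{basic results}(2), $\C$ is locally self-injective, so Theorem \ref{main theorem} applies and $\nu$ induces an equivalence $\overline{\nu}: \C\mbox{-\rm{mod}}/\Ker(\nu) \xrightarrow{\ \sim\ } \C\mbox{-\rm{fdmod}}$.

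It then remains to identify $\Ker(\nu)$ with $\C\mbox{-\rm{fdmod}}$. For the inclusion $\C\mbox{-\rm{fdmod}} \subseteq \Ker(\nu)$, I would invoke Corollary \ref{kernel}: every morphism in $\C$ is a monomorphism by Lemma \ref{basic results}(3), and the poset $I$ of objects is (order-isomorphic to) $\mathbb{N}$ — indexed by cardinalities of finite sets, resp.\ dimensions of $\F_q$-spaces — which has no maximal element, so the hypotheses of Corollary \ref{kernel} are met. For the reverse inclusion $\Ker(\nu) \subseteq \C\mbox{-\rm{fdmod}}$, I would apply the equivalence $(1)\Leftrightarrow(2)$ of Proposition \ref{inverse inclusion} (valid since $\Bbbk$ has characteristic zero and $\C$ is locally self-injective): condition (2) of that proposition asserts exactly that every finitely generated, infinite-dimensional left $\C$-module $V$ has $\Hom_\C(V, Ae_i)\neq 0$ for some $i\in I$, which is precisely the content of Lemma \ref{basic results}(4).

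Combining the two inclusions gives $\Ker(\nu) = \C\mbox{-\rm{fdmod}}$, and substituting this into the equivalence from Theorem \ref{main theorem} yields the desired equivalence $\C\mbox{-\rm{mod}}/\C\mbox{-\rm{fdmod}} \xrightarrow{\ \sim\ } \C\mbox{-\rm{fdmod}}$ induced by $\nu$.

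There is no real analytic obstacle here: the entire difficulty of the theorem has been pushed into (a) the abstract machinery of Section \ref{main result}, already established, and (b) the four structural facts of Lemma \ref{basic results}, which are cited from \cite{gl1}, \cite{gl2} and \cite{nagpal}. The only point that warrants care is verifying that the skeletons genuinely satisfy all the running hypotheses — in particular the absence of a maximal object, needed for Corollary \ref{kernel} — and making sure the numbered condition used from Proposition \ref{inverse inclusion} matches Lemma \ref{basic results}(4) verbatim (noting that $\Hom_A(-,Ae_i)$ and $\Hom_\C(-,Ae_i)$ denote the same thing under the identification of graded $A$-modules with $\C$-modules).
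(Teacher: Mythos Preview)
Your proposal is correct and follows essentially the same argument as the paper: verify the standing finiteness hypotheses, apply Theorem \ref{main theorem} via Lemma \ref{basic results}(1)(2), then identify $\Ker(\nu)$ with $\C\mbox{-\rm{fdmod}}$ using Corollary \ref{kernel} with Lemma \ref{basic results}(3) for one inclusion and Proposition \ref{inverse inclusion} with Lemma \ref{basic results}(4) for the other. Your write-up is slightly more explicit than the paper's (spelling out inwards and hom-finiteness and the $\Hom_A$/$\Hom_\C$ identification), but the logical structure is identical.
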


\begin{proof}
It is clear that $\C$ is an EI-category which is inwards finite and hom-finite, and Lemma \ref{basic results}(1) tells us that $\C$ is a locally noetherian category. Clearly, the set of objects of $\C$ has no maximal element. By Lemma \ref{basic results}(2) and Theorem \ref{main theorem}, the Nakayama functor $\nu$ induces an equivalence of categories from $\C\mbox{-mod}/\Ker(\nu)$ to $\C\mbox{-fdmod}$. We claim $\Ker(\nu)=\C\mbox{-fdmod}$. Indeed, by Corollary \ref{kernel} and Lemma \ref{basic results}(3), one has $\C\mbox{-fdmod} \subseteq \Ker(\nu)$; and by Lemma \ref{basic results}(4) and Proposition \ref{inverse inclusion}, one gets $\Ker(\nu) \subseteq \C\mbox{-fdmod}$.
\end{proof}

\end{document}